\renewenvironment{proof}[1][\proofname]{{\bfseries #1.} }{\qed}
\def\Cov{{\rm Cov\,}}
\newcommand{\field}[1]{\mathbb{#1}}
\newcommand{\R}{\field{R}}
\newcommand{\Var}{{\rm Var}}
\newcommand{\e}{{\rm e}}
\newcommand{\eps}{\varepsilon}
\def\authors#1{{ \begin{center} #1 \vspace{0pt} \end{center} } \smallskip}
\def\institution#1{{\sl \begin{center} #1 \vspace{0pt} \end{center} } }
\def\inst#1{\unskip $^{#1}$}
\def\title#1{{\huge\bf  \begin{center} #1 \vspace{0pt} \end{center}  } \smallskip}
\def\E{{\mathbb{ E}}}
\def\P{{\mathbb{P}}}
\def\paref#1{(\ref{#1})}
\newtheorem{theorem}{Theorem}[section]
\newtheorem{example}[theorem]{Example}
\newtheorem{lemma}[theorem]{Lemma}
\newtheorem{definition}[theorem]{Definition}
\newtheorem{remark}[theorem]{Remark}
\numberwithin{equation}{section}
\begin{document}

\date{Jun 2020}

\title{\sc Moderate Deviation estimates for\\ Nodal Lengths of\\ Random Spherical Harmonics}
\authors{\large Claudio Macci\inst{*}, Maurizia Rossi\inst{\diamond,}\footnote{Corresponding author. E-mail address: \texttt{maurizia.rossi@unimib.it}} and Anna Paola Todino\inst{\star}}
\institution{\inst{*}Dipartimento di Matematica, Universit\`a di Roma ``Tor Vergata"\\
\inst{\diamond,}Dipartimento di Matematica e Applicazioni, Universit\`a di Milano-Bicocca\\
\inst{\star}Fakult\"at f\"ur Mathematik, Ruhr-Universit\"at Bochum}

\begin{abstract}

We prove Moderate Deviation estimates for nodal lengths of random spherical harmonics 
both on the whole sphere and on shrinking spherical domains. Central Limit Theorems for the latter were recently established in Marinucci, Rossi and Wigman (2020) and Todino (2020), respectively. Our proofs are based on the combination of a Moderate Deviation Principle by Schulte and Th\"ale (2016) for sequences of random variables living in a fixed Wiener chaos with a well-known
result based on the concept of exponential equivalence. 
 
\smallskip

\noindent {\sc Keywords and Phrases:} Nodal length; Random Spherical Harmonics; Moderate Deviation Principles; Chaotic Expansions; Exponential Equivalence.

\smallskip

\noindent {\sc AMS Classification:} 60F10, 60G15, 60G60.

\end{abstract}

\section{Introduction: background and motivations}

\subsection{Random spherical harmonics}\label{sub:RSH}

Let $\mathbb S^2$ denote the two-dimensional unit sphere with the round metric and $\Delta$ the spherical Laplacian. In standard spherical coordinates $(\theta, \varphi)\in [0,\pi]\times [0,2\pi)$, where $\theta$ is the colatitude, the metric takes the form 
\begin{equation}\label{metric}
dx^2 = d\theta^2 + \sin^2\theta d\varphi^2.
\end{equation}
It is well-known that the spectrum of $\Delta$ is purely discrete, its eigenvalues are of the form $-\ell(\ell+1)$ where $\ell\in \mathbb N$ and, for each $\ell$, the family of the so-called spherical harmonics of degree $\ell$ $\lbrace Y_{\ell,m}, m = 1,\dots, 2\ell +1\rbrace$ is a real orthonormal basis of the $\ell$-th eigenspace \cite[\S 3.4]{MPbook}. 
\begin{definition}\label{defRSH}
For $\ell\in \mathbb N$, the $\ell$-th random spherical harmonic $T_\ell$ is a centered Gaussian field on $\mathbb S^2$ whose covariance kernel is given by 
\begin{equation}\label{defCov} 
\Cov(T_\ell(x), T_\ell(y)) = P_\ell(\cos d(x,y)),\quad x,y\in \mathbb S^2,
\end{equation} 
where $P_\ell$ denotes the Legendre polynomial \cite[\S 13.1.2]{MPbook} of degree $\ell$ and $d(x,y)$ the spherical geodesic distance (see (\ref{metric})) between $x$ and $y$. 
\end{definition}
Equivalently, one can define $T_\ell$ as follows
\begin{equation}\label{defT} 
T_\ell(x) := \sqrt{\frac{4\pi}{2\ell+1} } \sum_{m=1}^{2\ell+1} a_{\ell,m} Y_{\ell,m}(x),\quad x\in \mathbb S^2,
\end{equation} 
where $\{a_{\ell,m}, m=1,\dots, 2\ell+1\}$ are standard Gaussian and independent random variables, defined on a probability space $(\Omega, \mathcal F, \mathbb P)$. Actually, from the addition formula \cite[(3.42)]{MPbook} for random spherical harmonics the covariance kernel of $T_\ell$ in (\ref{defT}) is given by  (\ref{defCov}). It is immediate that $T_\ell$ is isotropic, and that it is $\mathbb P-$a.s. an eigenfunction of the spherical Laplacian with eigenvalue $-\ell(\ell+1)$. We can assume $\lbrace T_\ell, \ell \in \mathbb N\rbrace$ to be independent random fields, indeed they are the Fourier components of isotropic Gaussian fields on the sphere, see e.g. \cite{BMV07} and \cite[\S 5, \S6]{MPbook}.  

Now we recall the Hilb's asymptotic formula: let $\epsilon >0$, uniformly for $\theta \in [0, \pi-\epsilon]$ and $\ell\in \mathbb N_{\ge 1}$
\begin{equation}\label{hilb}
P_\ell(\cos \theta) = \sqrt{\frac{\theta}{\sin \theta}}\, J_0 \left ( \left (\ell+1/2 \right ) \theta \right ) + O \left (\ell^{-3/2}\right ),
\end{equation} 
where $J_0$ is the Bessel function \cite[\S 1.71]{Sze75} of the first kind of order zero, see the conventions below for the meaning of the $O-$notation. The scaling limit, as $\ell\to +\infty$, of $T_\ell$ is the so-called Berry's Random Wave model which, according to Berry's conjecture \cite{Ber77}, should model the local behavior of high-energy deterministic eigenfunctions on ``generic chaotic" surfaces. 

\smallskip

\noindent \emph{Conventions.}  Given two sequences of positive real numbers $\lbrace x_n, n\in \mathbb N \rbrace$ and $\lbrace y_n, n\in \mathbb N\rbrace$ we will write $x_n=O(y_n)$ if the ratio $x_n/y_n$ is asymptotically bounded, and $x_n = o(y_n)$ if $\lim_{n\to +\infty} x_n/y_n=0$. Moreover, we will write $x_n \approx y_n$ if both $ x_n = O(y_n)$ and $y_n = O(x_n)$ hold.

\subsection{Nodal lengths: asymptotic distribution}\label{subsecNodal}

Let us consider the nodal set 
$
T_\ell^{-1}(0) := \lbrace x\in \mathbb S^2 : T_\ell(x) = 0\rbrace.
$
It is well-known that $T_\ell^{-1}(0)$ is an $\mathbb P-$a.s. smooth curve whose connected components are homeomorphic to the circle. 
We are interested in the high-energy geometry of the nodal set, in particular in the asymptotic behavior, as $\ell\to +\infty$, of the nodal length 
\begin{equation}\label{nodalDef}
\mathcal L_\ell:=\text{length}(T_\ell^{-1}(0)).
\end{equation}
  The latter received great attention also in view of Yau's conjecture on nodal volumes of deterministic eigenfunctions on compact Riemannian manifolds \cite{Yau82}. We collect the main known results on the distribution of $\mathcal L_\ell$ in a single theorem (Theorem \ref{thS} below): the expected length was studied in \cite{Ber85}, the asymptotic variance in \cite{Wig10} and the second order fluctuations of $\mathcal L_\ell$ in \cite{MRW20}. In order to state them, let us recall that, for $X,Y$ integrable real random variables, the Wasserstein distance between $X$ and $Y$ is defined as $d_W(X,Y) := \sup_{h} | \mathbb E[h(X)] - \mathbb E[h(Y)] |$, where the supremum is taken over the set of Lipschitz functions whose Lipschitz constant is $\leq 1$. From now on, $Z\sim \mathcal N(0,1)$ will denote a standard Gaussian random variable.
\begin{theorem}\label{thS}
For every $\ell \in \mathbb N$ 
 \begin{equation}\label{mean1}
\mathbb E[\mathcal L_\ell] = \frac{4\pi}{2\sqrt 2} \sqrt{\ell(\ell+1)}.
\end{equation}
As $\ell\to +\infty$, we have 
\begin{equation}\label{eqVar}
\Var(\mathcal L_\ell) =  \frac{1}{32} \log \ell \, (1+ o(1)).
\end{equation} 
Moreover, denoting 
 $$
\widetilde{\mathcal L}_\ell := \frac{\mathcal L_\ell - \mathbb E[\mathcal L_\ell]}{\sqrt{\Var(\mathcal L_\ell)}},
$$
a quantitative CLT in Wasserstein distance holds, i.e., as $\ell\to +\infty$
 \begin{equation}\label{Wass1}
 d_W \left ( \widetilde{\mathcal L}_\ell, Z       \right ) = O \left ( (\log \ell)^{-1/2} \right ).
 \end{equation}
\end{theorem}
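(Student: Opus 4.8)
The three assertions are of rather different character, so I would treat them in turn. For the mean (\ref{mean1}) the natural tool is the Kac--Rice formula. Since $\Cov(T_\ell(x),T_\ell(x))=P_\ell(1)=1$, the value $T_\ell(x)$ is standard Gaussian with density $1/\sqrt{2\pi}$ at the origin, while isotropy guarantees that at each fixed point the gradient $\nabla T_\ell(x)$ is a centered Gaussian vector independent of $T_\ell(x)$ (because $\mathbb E[T_\ell(x)\partial_i T_\ell(x)]=\tfrac12\partial_i\mathbb E[T_\ell^2]=0$), with i.i.d.\ components of variance $\ell(\ell+1)/2$; the latter follows from $P_\ell'(1)=\ell(\ell+1)/2$, equivalently from $\int_{\mathbb S^2}|\nabla T_\ell|^2=\ell(\ell+1)\int_{\mathbb S^2}T_\ell^2$ after normalization. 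Hence $\mathbb E[\,|\nabla T_\ell(x)|\,]=\tfrac{\sqrt\pi}{2}\sqrt{\ell(\ell+1)}$, and the Kac--Rice representation $\mathbb E[\mathcal L_\ell]=\int_{\mathbb S^2}p_{T_\ell(x)}(0)\,\mathbb E[\,|\nabla T_\ell(x)|\mid T_\ell(x)=0\,]\,dx$ collapses, by independence and isotropy, to $4\pi\cdot\frac{1}{\sqrt{2\pi}}\cdot\frac{\sqrt\pi}{2}\sqrt{\ell(\ell+1)}=\frac{4\pi}{2\sqrt2}\sqrt{\ell(\ell+1)}$.

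For (\ref{eqVar}) and (\ref{Wass1}) I would pass through the Wiener--It\^o chaos expansion of the nodal length. Writing $\mathcal L_\ell$ via the coarea formula as the $L^2(\Omega)$ limit of $\frac{1}{2\varepsilon}\int_{\mathbb S^2}\mathbf 1_{\{|T_\ell(x)|\le\varepsilon\}}|\nabla T_\ell(x)|\,dx$ and expanding both the approximate Dirac factor (in Hermite polynomials of $T_\ell(x)$) and the gradient modulus (in Hermite polynomials of the gradient components), one obtains $\mathcal L_\ell=\sum_{q\ge0}\mathcal L_\ell[2q]$, where $\mathcal L_\ell[2q]$ is the projection onto the $2q$-th Wiener chaos and the odd chaoses vanish by the symmetry $T_\ell\mapsto-T_\ell$. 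The $q=0$ term is the mean. The crucial structural input, established in \cite{Wig10,MRW20}, is a Berry-type cancellation: the second--chaos projection $\mathcal L_\ell[2]$ vanishes, so that the leading fluctuations are carried by the fourth chaos and $\Var(\mathcal L_\ell)=\Var(\mathcal L_\ell[4])\,(1+o(1))$.

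The heart of (\ref{eqVar}) is then the asymptotic evaluation of $\Var(\mathcal L_\ell[4])$. This reduces to integrals over $\mathbb S^2\times\mathbb S^2$ of powers and derivatives of the covariance kernel $P_\ell(\cos d(x,y))$, which by isotropy become one--dimensional integrals in the geodesic distance $\theta$. Inserting Hilb's asymptotics (\ref{hilb}), the Legendre polynomial is replaced by the Bessel profile $J_0((\ell+\tfrac12)\theta)$, whose amplitude decays like $(\ell\theta)^{-1/2}$; the resulting integrals are logarithmically divergent over the range $\ell^{-1}\lesssim\theta\lesssim1$, and after restoring the $\ell^2$ scaling carried by the gradient variance one extracts $\Var(\mathcal L_\ell[4])=\frac{1}{32}\log\ell\,(1+o(1))$. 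For the quantitative CLT (\ref{Wass1}) I would invoke the quantitative Fourth Moment Theorem of Nourdin--Peccati: since $\mathcal L_\ell[4]$ lives in a fixed chaos, $d_W(\mathcal L_\ell[4]/\sqrt{\Var(\mathcal L_\ell[4])},Z)$ is bounded by a multiple of $\sqrt{|\mathrm{cum}_4(\mathcal L_\ell[4])|}/\Var(\mathcal L_\ell[4])$ (equivalently by contraction norms of the chaos kernel), which the same kernel estimates show to be $O((\log\ell)^{-1/2})$; a triangle inequality controlling the higher-order remainder $\sum_{q\ge3}\mathcal L_\ell[2q]$ in $L^2$ then transfers the bound to $\widetilde{\mathcal L}_\ell$.

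The main obstacle is the precise second--order analysis of the fourth chaos: verifying the exact cancellation of $\mathcal L_\ell[2]$, extracting the sharp constant $\tfrac1{32}$ from the singular kernel integrals, and---most delicately---controlling both the error terms in Hilb's formula and the contribution of the chaoses of order $\ge6$ uniformly in $\ell$, so that the variance asymptotics and the Stein--Malliavin bound hold simultaneously with the claimed rate.
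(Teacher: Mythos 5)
Your outline is correct and follows essentially the same route as the paper, which does not reprove Theorem \ref{thS} but cites \cite{Ber85} for the mean, \cite{Wig10} for the variance, and \cite{MRW20} for the CLT, and in \S 3.3.1 sketches exactly your third step: the second chaos vanishes, the fourth chaos is asymptotically fully correlated with the sample trispectrum $\mathcal M_\ell$, and the Fourth Moment Theorem plus the $L^2$ bound (\ref{red1}) yield the $O((\log\ell)^{-1/2})$ Wasserstein rate. Your Kac--Rice computation of the mean and the Hilb-based variance analysis match the standard arguments in those references.
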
  
Note that, from (\ref{mean1}) and (\ref{eqVar}) we have $\mathcal L_\ell/ \sqrt{\ell(\ell+1)}\to 4\pi/(2\sqrt{2})$ $\mathbb P-$a.s. as $\ell\to +\infty$, consistently with Yau's conjecture \cite{Yau82}. From (\ref{Wass1}) we conclude in particular that the asymptotic distribution of the nodal length is Gaussian. Moreover, from (\ref{Wass1}) and \cite[(C.2.6)]{NP12} 
\begin{equation}
d_{Kol}\left ( \widetilde{\mathcal L}_\ell, Z       \right ) = O \left ( (\log \ell)^{-1/4} \right ),\end{equation}
where for arbitrary real random variables $X,Y$, the Kolmogorov distance between $X$ and $Y$ is defined as $d_{Kol}(X,Y) := \sup_{x\in \mathbb R} \left | \mathbb P(X\le x) - \mathbb P(Y\le x)\right |$.  

\subsubsection{Shrinking spherical domains}

In \cite{Tod18} the asymptotic behavior of the nodal length in shrinking spherical domains was investigated. 
Fixed a point $x_0\in \mathbb S^2$, let $B_r$ denote the spherical cap of radius $r>0$ centered at $x_0$ and consider the length of nodal lines in $B_r$ 
$$
\mathcal L_{\ell, B_r} := \text{length} (T_\ell^{-1}(0) \cap B_r).
$$
We recall that the area of $B_{r}$ is equal to $2\pi(1-\cos r)$. We have the following results summarized in the next theorem.
\begin{theorem}\label{thB}
For any $r>0$
\begin{equation}\label{mean2}
\mathbb E[\mathcal L_{\ell, B_r}] = \frac{2\pi(1-\cos r)}{2\sqrt{2}} \sqrt{\ell(\ell+1)}.
\end{equation} 
For a sequence of radii $\lbrace r_\ell, \ell \in \mathbb N\rbrace$ converging to zero not too rapidly ($r_\ell \ell \to +\infty$) we have, as $\ell\to +\infty$,
\begin{equation}\label{var2}
\Var(\mathcal L_{\ell, B_{r_\ell}}) = \frac{1}{256} r_\ell^2 \log (r_\ell  \ell) + O (r_\ell^2).
 \end{equation} 
Moreover, denoting 
$$\widetilde{\mathcal L}_{\ell, B_{r_{\ell}}}:= \frac{\mathcal L_{\ell, B_{r_\ell}} - \mathbb E[\mathcal L_{\ell, B_{r_\ell}}]}{ \sqrt{\Var(\mathcal L_{\ell, B_{r_\ell}})}}$$
 we have a quantitative CLT in Wasserstein distance as $\ell\to +\infty$
\begin{equation}\label{Wass2}
d_W \left ( \widetilde{\mathcal L}_{\ell, B_{r_{\ell}}}, Z \right ) = O \left (  \left(\log r_\ell \ell \right)^{-1/2}  \right ).
\end{equation}
\end{theorem}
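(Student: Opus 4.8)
The plan is to treat the three assertions by the Kac--Rice/Wiener-chaos program underlying \cite{Wig10,MRW20,Tod18}. For the mean (\ref{mean2}), I would start from the (a.s.\ and $L^2$) representation of the nodal length as a weighted local time,
\[
\mathcal L_{\ell,B_r} = \int_{B_r} |\nabla T_\ell(x)|\,\delta_0(T_\ell(x))\,dx,
\]
and apply the Kac--Rice formula. Since $T_\ell$ is centered, isotropic and Gaussian, the law of the pair $(T_\ell(x),\nabla T_\ell(x))$ does not depend on $x$; differentiating the covariance (\ref{defCov}) and using the eigenvalue relation shows that each component of $\nabla T_\ell$ has variance $\tfrac12\ell(\ell+1)$, while $T_\ell(x)$ is standard and independent of its gradient. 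Hence the expected nodal-length density is the constant $\tfrac{1}{2\sqrt2}\sqrt{\ell(\ell+1)}$, and integrating over $B_r$, whose area is $2\pi(1-\cos r)$, yields (\ref{mean2}) exactly, for every $\ell$ and every $r>0$.

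For the variance (\ref{var2}) I would expand the centered length into Wiener chaoses,
\[
\mathcal L_{\ell,B_r} - \mathbb E[\mathcal L_{\ell,B_r}] = \sum_{q\ge 1} \mathcal L_{\ell,B_r}[2q],
\]
the odd chaoses vanishing by the symmetry $|\nabla T_\ell|\mapsto|\nabla(-T_\ell)|$. The decisive structural fact, inherited from Wigman's computation, is that the second-order chaotic component vanishes identically: the second-chaos contributions of the Hermite expansions of $\delta_0(T_\ell)$ and of $|\nabla T_\ell|$ cancel, using that $T_\ell$ is an eigenfunction so that its gradient field is rigidly tied to $T_\ell$. Consequently the fourth chaos dominates, and I would compute $\Var(\mathcal L_{\ell,B_r}[4])$ by expressing it through $P_\ell(\cos d(x,y))$ and its derivatives and integrating over $B_r\times B_r$. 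In the regime $r=r_\ell$ with $r_\ell\ell\to\infty$ I would rescale distances by $\ell$ and insert Hilb's asymptotics (\ref{hilb}), replacing $P_\ell$ by the Bessel kernel $J_0((\ell+\tfrac12)\theta)$; the resulting oscillatory integrals of $J_0$ and its derivatives over the rescaled cap produce the logarithmic factor, giving the leading term $\tfrac{1}{256}r_\ell^2\log(r_\ell\ell)$ with an $O(r_\ell^2)$ remainder. One must separately verify that the chaoses of order $2q\ge 6$ contribute only $O(r_\ell^2)$, again via moment bounds on $P_\ell$ and its derivatives.

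For the quantitative CLT (\ref{Wass2}) I would invoke the Fourth Moment Theorem in its quantitative Stein--Malliavin form \cite{NP12}. Since the fourth chaos carries the full asymptotic variance, it suffices to show that the normalized fourth-chaos component is asymptotically Gaussian and that the remaining chaoses are negligible in $L^2$. The Wasserstein distance between the normalized $\mathcal L_{\ell,B_{r_\ell}}[4]$ and $Z$ is then controlled by a constant times the square root of its fourth cumulant divided by the square of its variance; estimating this cumulant by contraction norms of the defining kernels, and combining it with the chaos-tail bound from the previous step, produces the rate $O((\log r_\ell\ell)^{-1/2})$.

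The main obstacle is the variance computation in the shrinking-domain regime: one must control the double integral of the derivatives of the Legendre kernel over $B_{r_\ell}\times B_{r_\ell}$ uniformly, both near the diagonal (where the local Bessel behavior is responsible for the logarithm) and where Hilb's formula degenerates near the antipode. Because $r_\ell\to 0$ the cap stays away from the pole, but the error terms in (\ref{hilb}) and the effect of the rescaling must be tracked carefully to separate the genuine $r_\ell^2\log(r_\ell\ell)$ term from the $O(r_\ell^2)$ corrections. Re-establishing the sharp second-chaos cancellation in this localized setting, rather than on the whole sphere, is the other delicate point.
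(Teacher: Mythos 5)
Your overall strategy (Kac--Rice for the mean, a chaos expansion with a dominant fourth chaos for the variance, the Fourth Moment Theorem plus an $L^2$ tail bound for the Wasserstein rate) is the same program the paper relies on: Theorem \ref{thB} is not proved here but imported from \cite{Tod18}, and the paper's own sketch in \S \ref{shrinking} reduces everything to the local sample trispectrum $\mathcal M_{\ell,B_{r_\ell}}$ of (\ref{Mloc}), i.e.\ to the single term $\int_{B_{r_\ell}}H_4(T_\ell(x))\,dx$, rather than to the full projection $\mathcal L_{\ell,B_{r_\ell}}[4]$, which also contains mixed Hermite monomials in $T_\ell$ and $\widetilde\nabla T_\ell$ as in (\ref{chaos_decomp_sphere}). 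That reduction (the asymptotic full correlation of Proposition 3.3 in \cite{Tod18}) is what makes the cumulant estimate tractable; your plan to bound contraction norms of the kernels of $\mathcal L_{\ell,B_{r_\ell}}[4]$ directly is workable but substantially heavier.

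There is one concrete misstep. You assert that ``the second-order chaotic component vanishes identically,'' carrying over the identity $\mathcal L_\ell[2]=0$ from the full sphere. That identity comes from Green's formula on the closed manifold $\mathbb S^2$; on the cap $B_{r_\ell}$ the integration by parts produces boundary terms, and $\mathcal L_{\ell,B_{r_\ell}}[2]$ does \emph{not} vanish. What is true, and what \cite{Tod18} proves (see the discussion around (\ref{L2palla})), is that $\mathcal L_{\ell,B_{r_\ell}}[2]$ --- like every projection other than the fourth --- has variance $O(r_\ell^2)$, hence is absorbed into the error term of (\ref{var2}) and into the bound (\ref{L2palla}), since the fourth chaos carries the larger variance of order $r_\ell^2\log(r_\ell\ell)$. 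So the step you flag at the end as ``re-establishing the sharp second-chaos cancellation in this localized setting'' should not be attempted: the cancellation fails, and the correct substitute is the $O(r_\ell^2)$ bound on the boundary contribution. With that replacement your outline matches the actual proof.
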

Indeed (\ref{Wass2}) follows from the proof of Theorem 2.2 in \cite{Tod18} (at the end of \S 5.2.3) where
in particular it is proved that the distribution of the nodal length in shrinking domains is asymptotically Gaussian. As in the previous case, we can also deduce that, as $\ell \to +\infty$,
\begin{equation}
d_{Kol}\left ( \widetilde{\mathcal L}_{\ell, B_{r_{\ell}}}, Z       \right ) = O  \left (  \left(\log r_\ell \ell \right)^{-1/4}  \right ).\end{equation}

In this paper we are interested in refinements of Central Limit Theorems stated above (Theorems \ref{thS} and \ref{thB}). 

\subsection{Moderate Deviation Principles}\label{sub:MDPs}

The theory of Large Deviations allows an asymptotic computation of small
probabilities at exponential scales. Here we start by recalling a basic definition.
From now on let $(\Omega, \mathcal F, \mathbb P)$ be a complete probability space,
and $\lbrace X_n, n\in \mathbb N\rbrace$ a sequence of real-valued random variables: for each $n$ the map
$$
X_n : \Omega \to \mathbb R$$
is measurable with respect to $\mathcal F$ and $\mathcal B(\mathbb R)$ (the Borel $\sigma$-field of $\mathbb R$) 
on $\Omega$ and $\mathbb R$ respectively. 
\begin{definition}\label{defLDP}  We say that $\lbrace X_n, n\in \mathbb N\rbrace$ satisfies the Large Deviation Principle (LDP) with speed $0\le s_n\nearrow +\infty$ and good rate function\footnote{See \S 1.2 in \cite{DZ98} for details.} $\mathcal I$ if for every $\alpha\geq 0$ the level set $\lbrace x : \mathcal I(x) \le \alpha \rbrace$ is compact and for all $B\in \mathcal B(\mathbb R)$ we have 
\begin{equation*}
- \inf_{x\in \mathring{B}} \mathcal I(x)\le \liminf_{n\to +\infty} \frac{1}{s_n} \log \mathbb P(X_n\in B) \le \limsup_{n\to +\infty} \frac{1}{s_n} \log \mathbb P(X_n\in B) \le - \inf_{x\in \bar{B}} \mathcal I(x),
\end{equation*}
where $\mathring B$ (resp. $\bar B$) denotes the interior (resp. the closure) of $B$. 
\end{definition}

Let us now recall the notion of exponential equivalence \cite[Definition 4.2.10]{DZ98} related to the question whether the LDP for $\lbrace Y_n, n\in \mathbb N\rbrace$ can be deduced from the LDP for $\lbrace X_n, n\in \mathbb N\rbrace$, $\lbrace Y_n, n\in \mathbb N\rbrace$ being another sequence of real-valued random variables.
\begin{definition}\label{expdef} We say that $\lbrace X_n, n\in \mathbb N\rbrace$ and $\lbrace Y_n, n\in \mathbb N\rbrace$ are exponentially equivalent at speed $0\le s_n\nearrow +\infty$ if, for every $\delta>0$,
\begin{equation}\label{exp_equiv}
\limsup_{n\to +\infty} \frac{1}{s_n} \log \mathbb P ( | X_n - Y_n | > \delta ) = -\infty.
\end{equation} 
\end{definition}
As far as the LDP is concerned, exponentially equivalent sequences of random variables are indistinguishable \cite[Theorem 4.2.13]{DZ98}. 
\begin{lemma}\label{thDZ}
Assume that $\lbrace X_n, n\in \mathbb N\rbrace$ satisfies the LDP with speed $s_n$ and good rate function $\mathcal I$.
Then, if $\lbrace X_n, n\in \mathbb N\rbrace$ and $\lbrace Y_n, n\in \mathbb N\rbrace$ are exponentially equivalent at
speed $s_n$, the same LDP holds for $\lbrace Y_n, n\in \mathbb N\rbrace$.
\end{lemma}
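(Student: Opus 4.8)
The plan is to transfer both halves of the Large Deviation Principle from $\{X_n\}$ to $\{Y_n\}$ by a ``sandwich'' argument: each event involving $Y_n$ is compared with a slightly enlarged or slightly shrunk event involving $X_n$, and the mismatch is absorbed into the event $\{|X_n-Y_n|>\delta\}$, which is exponentially negligible by hypothesis \eqref{exp_equiv}. Concretely, I would prove the upper bound over closed sets and the lower bound over open sets separately, and only at the end let $\delta\downarrow 0$. Throughout, the essential probabilistic tool is the elementary exponential max--sum estimate
$$
\limsup_{n\to+\infty}\frac{1}{s_n}\log(a_n+b_n)=\max\Big(\limsup_{n\to+\infty}\frac{1}{s_n}\log a_n,\ \limsup_{n\to+\infty}\frac{1}{s_n}\log b_n\Big),
$$
valid for nonnegative sequences (the ``principle of the largest term'', see \cite[Lemma 1.2.15]{DZ98}).

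For the upper bound, fix a closed set $F\subseteq\mathbb R$ and $\delta>0$, and let $F^\delta:=\{x:\,d(x,F)\le\delta\}$ denote its closed $\delta$-neighborhood. Since $|X_n-Y_n|\le\delta$ together with $Y_n\in F$ forces $X_n\in F^\delta$, one has the inclusion $\{Y_n\in F\}\subseteq\{X_n\in F^\delta\}\cup\{|X_n-Y_n|>\delta\}$, whence by the union bound $\mathbb P(Y_n\in F)\le\mathbb P(X_n\in F^\delta)+\mathbb P(|X_n-Y_n|>\delta)$. Applying the max--sum estimate, the LDP upper bound for $\{X_n\}$ on the closed set $F^\delta$, and the exponential equivalence \eqref{exp_equiv} (which sends the second term to $-\infty$), I obtain $\limsup_n s_n^{-1}\log\mathbb P(Y_n\in F)\le-\inf_{x\in F^\delta}\mathcal I(x)$, for every $\delta>0$.

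For the lower bound, fix an open set $G\subseteq\mathbb R$ and a point $x\in G$, and choose $\delta>0$ so small that the open ball $B(x,2\delta)\subseteq G$. Then $\{X_n\in B(x,\delta)\}\cap\{|X_n-Y_n|\le\delta\}\subseteq\{Y_n\in B(x,2\delta)\}\subseteq\{Y_n\in G\}$, so that $\mathbb P(X_n\in B(x,\delta))\le\mathbb P(Y_n\in G)+\mathbb P(|X_n-Y_n|>\delta)$. Since $s_n^{-1}\log\mathbb P(|X_n-Y_n|>\delta)\to-\infty$ by \eqref{exp_equiv}, the largest-term principle shows that for large $n$ the right-hand side is dominated by $\mathbb P(Y_n\in G)$; combining this with the LDP lower bound for $\{X_n\}$ on the open ball $B(x,\delta)$ gives $\liminf_n s_n^{-1}\log\mathbb P(Y_n\in G)\ge-\mathcal I(x)$. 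As $x\in G$ is arbitrary, optimizing yields $\liminf_n s_n^{-1}\log\mathbb P(Y_n\in G)\ge-\inf_{x\in G}\mathcal I(x)$, the required lower bound.

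The only genuinely delicate point---the step I expect to be the main obstacle---is removing the $\delta$ from the upper bound, i.e.\ showing $\lim_{\delta\downarrow0}\inf_{x\in F^\delta}\mathcal I(x)=\inf_{x\in F}\mathcal I(x)$ for closed $F$. When $\inf_F\mathcal I=+\infty$ one argues directly; otherwise, as $\delta$ decreases the infimum $\inf_{F^\delta}\mathcal I$ increases and stays bounded above by $\inf_F\mathcal I$, hence converges to some limit $L\le\inf_F\mathcal I$. Choosing near-minimizers $x_\delta\in F^\delta$ with $\mathcal I(x_\delta)\le\inf_{F^\delta}\mathcal I+\delta$, these eventually lie in a fixed level set $\{\mathcal I\le\alpha\}$, which is compact precisely because $\mathcal I$ is a \emph{good} rate function. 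Extracting a convergent subsequence $x_\delta\to x_\ast$, noting $x_\ast\in F$ (as $F$ is closed and $d(x_\delta,F)\le\delta\to0$), and using the lower semicontinuity of $\mathcal I$ gives $\inf_F\mathcal I\le\mathcal I(x_\ast)\le\liminf_\delta\inf_{F^\delta}\mathcal I=L$, the reverse inequality. This compactness-and-lower-semicontinuity argument, rather than the elementary probabilistic estimates, is where goodness of $\mathcal I$ is genuinely used; feeding it back into the upper bound completes the verification that $\{Y_n\}$ satisfies the LDP with the same speed $s_n$ and the same rate function $\mathcal I$.
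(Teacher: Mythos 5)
Your proof is correct. Note that the paper does not actually prove Lemma \ref{thDZ}: it is imported verbatim from \cite[Theorem 4.2.13]{DZ98}, and what you have written is essentially the standard textbook proof of that theorem --- upper bound over closed sets via the closed blow-up $F^\delta$, lower bound over open sets via a small ball around an arbitrary point, the event $\{|X_n-Y_n|>\delta\}$ absorbing the mismatch thanks to \eqref{exp_equiv}, and goodness of $\mathcal I$ (compact level sets plus lower semicontinuity) used exactly where it must be, namely to pass from $\inf_{F^\delta}\mathcal I$ to $\inf_{F}\mathcal I$ as $\delta\downarrow 0$. The one step you should state more carefully is in the lower bound: the largest-term principle as you quote it is a $\limsup$ identity, whereas after the inequality $\mathbb P(X_n\in B(x,\delta))\le\mathbb P(Y_n\in G)+\mathbb P(|X_n-Y_n|>\delta)$ you need its one-sided $\liminf$ variant
\[
\liminf_{n\to+\infty}\frac{1}{s_n}\log(a_n+b_n)\le\max\Bigl(\liminf_{n\to+\infty}\frac{1}{s_n}\log a_n,\ \limsup_{n\to+\infty}\frac{1}{s_n}\log b_n\Bigr),
\]
which follows from $a_n+b_n\le 2\max(a_n,b_n)$ and makes precise your claim that the exponentially negligible term ``is dominated'' and cannot contribute to the $\liminf$. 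With that routine adjustment (and spelling out the $\inf_F\mathcal I=+\infty$ case of the $\delta\downarrow 0$ limit by the same compactness argument, as you indicate), the argument is complete and fully consistent with the result the paper cites.
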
 
A Moderate Deviation Principle (MDP) is a class of LDPs for families of random variables depending on the choice of certain scalings in a suitable class. Moreover, all these LDPs (whose speed function depends on the scaling) are ruled by the same quadratic rate function vanishing at zero (actually, usually one deals with families of centered random variables, or asymptotically centered). 
In several cases the choice of the scaling parameters allows to fill the gap between the convergence in probability to a constant and the convergence in law to a centered Gaussian random variable. 
\begin{example}\label{exMDP}
Let $\lbrace Z_n, n\in \mathbb N\rbrace$ be a sequence of i.i.d. real-valued random variables such that the moment-generating function 
\begin{equation*}
\mathbb R \ni \lambda\mapsto   \log \mathbb E\left [\e^{ \lambda Z_1} \right ] \end{equation*}
is finite in some ball around the origin, $\mathbb E[Z_1]=0$ and $\Var(Z_1) = 1$. Let us define for each $n$ the random variable
\begin{equation*}
X_n :=  \sum_{i=1}^n Z_i/\sqrt{n}.
\end{equation*}
For any sequence of positive numbers $\lbrace a_n, n\in \mathbb N\rbrace$ such that 
$$
a_n\to +\infty,\qquad a_n/\sqrt n \to 0,
$$
a MDP with speed $a_n^2$ and rate function $\mathcal I(x) := x^2/2, x\in \mathbb R$, holds for $\lbrace X_n, n\in \mathbb N\rbrace$, namely for every Borel set $B\subset \mathbb R$ 
\begin{equation*}
- \inf_{x\in \mathring{B}} \mathcal I(x)\le \liminf_{n\to +\infty} \frac{1}{a^2_n} \log \mathbb P(X_n/a_n\in B) \le \limsup_{n\to +\infty} \frac{1}{a^2_n} \log \mathbb P(X_n/a_n\in B) \le - \inf_{x\in \bar{B}} \mathcal I(x),
\end{equation*}
where $\mathring B$ (resp. $\bar B$) denotes the interior (resp. the closure) of $B$. 
\end{example} 
The classical example \cite[Theorem 3.7.1]{DZ98} recalled just above concerns the empirical means of i.i.d. random variables; indeed, if these random variables have finite moment generating function in a neighborhood of the origin, a class of LDPs holds filling the gap between the asymptotic regimes of the Law of Large Numbers (LLN) and the Central Limit Theorem (CLT): indeed, using the same notation as in Example \ref{exMDP}, as $n\to +\infty$ we have
\begin{equation*}
X_n/\sqrt{n} \to 0\quad \mathbb P-a.s.,\qquad X_n \mathop{\to}^d Z\sim \mathcal N(0,1),
\end{equation*}
where $\mathop{\to}^d$ denotes convergence in distribution. 

 Furthermore, for completeness, we recall that a LDP linked to a LLN is provided by the celebrated Cram\'er Theorem \cite[Theorem 2.2.3]{DZ98}.

\section{Main results and outline of the paper}

\subsection{Statement of main results}

Our main results concern MDPs which refine Theorems \ref{thS} and \ref{thB}, namely a class of LDPs for nodal lengths of random spherical harmonics for certain scalings $\{a_\ell,\ell\in\mathbb{N}\}$ that tend to infinity 
slowly (see conditions (\ref{cond1}) and (\ref{cond2}) in Theorems \ref{th1} and \ref{th2} below), with speed 
$a_\ell^2$, and common quadratic rate function $\mathcal I(x)=x^2/2$, $x\in \mathbb R$. Moreover, for 
$a_\ell=1$ (and in such a case the condition $a_\ell \to +\infty$ in (\ref{cond1}) and (\ref{cond2}) fails) we have the convergence in law to the standard Normal distribution (Theorem \ref{thS} and Theorem \ref{thB}).
Recall the preliminaries
in Section \ref{sub:MDPs} and the discussion just after the statement of Lemma \ref{thDZ}.
\begin{theorem}\label{th1}
Let $\lbrace a_\ell, \ell \in \mathbb N\rbrace$ be any sequence of positive numbers such that, as $\ell\to +\infty$,
\begin{equation}\label{cond1}
a_\ell \to +\infty,\qquad \qquad a_\ell/\sqrt{\log \log \ell} \to 0.
\end{equation}
The sequence of random variables 
$$\left \lbrace \widetilde{\mathcal L}_\ell/a_\ell, \ell \in \mathbb N \right \rbrace$$ satisfies a MDP with speed $a_\ell^2$ and rate function $\mathcal I(x) = x^2/ 2$, $x\in \mathbb R$.
\end{theorem}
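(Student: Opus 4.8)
\noindent\emph{Proof idea.} The plan is to isolate the dominant chaotic component of the nodal length, establish the MDP for it via the Schulte--Th\"ale theorem, and then transfer the MDP to $\widetilde{\mathcal L}_\ell$ by exponential equivalence. Recall from \cite{MRW20} the Wiener--It\^o chaos expansion
\begin{equation*}
\mathcal L_\ell - \mathbb E[\mathcal L_\ell] = \sum_{q\ge 2} \mathcal L_\ell[2q],
\end{equation*}
where $\mathcal L_\ell[2q]$ is the projection onto the $2q$-th Wiener chaos (odd chaoses and the second chaos vanish). The fourth-order term dominates: $\Var(\mathcal L_\ell[4]) = \tfrac{1}{32}\log\ell\,(1+o(1))$ matches the leading order of $\Var(\mathcal L_\ell)$ in \eqref{eqVar}, while $\Var\big(\sum_{q\ge 3}\mathcal L_\ell[2q]\big)$ is of strictly smaller order. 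I therefore set $F_\ell := \mathcal L_\ell[4]/\sqrt{\Var(\mathcal L_\ell[4])}$, a unit-variance random variable in the fixed fourth Wiener chaos, and decompose $\widetilde{\mathcal L}_\ell = \rho_\ell F_\ell + R_\ell$ with $\rho_\ell := \sqrt{\Var(\mathcal L_\ell[4])/\Var(\mathcal L_\ell)}\to 1$ and $R_\ell$ the normalized remainder.

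First I would apply the Schulte--Th\"ale MDP for sequences living in a fixed Wiener chaos to $\{F_\ell/a_\ell\}$. This amounts to checking their hypothesis on the non-Gaussianity parameter $\mathcal M_{|\widehat\mu_\ell(4)|}$, which controls the third and fourth cumulants of $F_\ell$. The cumulant estimates behind the quantitative CLT \eqref{Wass1} give the order of $\mathcal M_{|\widehat\mu_\ell(4)|}$ as a power of $(\log\ell)^{-1}$; feeding this into the Schulte--Th\"ale admissibility condition produces exactly the window $a_\ell\to+\infty$, $a_\ell/\sqrt{\log\log\ell}\to 0$ of \eqref{cond1}, and yields an MDP for $\{F_\ell/a_\ell\}$ with speed $a_\ell^2$ and rate $\mathcal I(x)=x^2/2$.

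It then remains, by Lemma \ref{thDZ}, to show that $\{\widetilde{\mathcal L}_\ell/a_\ell\}$ and $\{F_\ell/a_\ell\}$ are exponentially equivalent at speed $a_\ell^2$, i.e. that for every $\delta>0$
\begin{equation*}
\limsup_{\ell\to+\infty}\frac{1}{a_\ell^2}\log \mathbb P\!\left(\tfrac{1}{a_\ell}\big|\widetilde{\mathcal L}_\ell-F_\ell\big|>\delta\right)=-\infty.
\end{equation*}
Since $\widetilde{\mathcal L}_\ell-F_\ell=(\rho_\ell-1)F_\ell+R_\ell$ has $L^2$ norm of order $(\log\ell)^{-1/2}$ by the variance estimates above, I would truncate the chaos expansion at a large fixed order $2Q$ and use hypercontractivity, which gives tails of the form $\mathbb P(|W|>t)\le \exp\!\big(-c_Q (t/\|W\|_2)^{1/Q}\big)$ for the finite-chaos part, the high-chaos tail being controlled in $L^2$. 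With $t=\delta a_\ell$, $\|W\|_2\approx(\log\ell)^{-1/2}$ and $a_\ell^2=o(\log\log\ell)$, the resulting exponent beats $a_\ell^2$ because $(\log\ell)^{1/(2Q)}$ grows faster than any power of $\log\log\ell$, so the displayed $\limsup$ is indeed $-\infty$; Lemma \ref{thDZ} then transfers the MDP from $\{F_\ell/a_\ell\}$ to $\{\widetilde{\mathcal L}_\ell/a_\ell\}$.

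I expect the delicate point to be this exponential-equivalence estimate, since $R_\ell$ is an infinite superposition of Wiener chaoses: no single hypercontractivity constant applies, and one must balance the chaos-order-dependent moment growth against the smallness of $\|R_\ell\|_2\approx(\log\ell)^{-1/2}$ and the very slow growth of $a_\ell$. What makes everything fit is precisely that $a_\ell=o(\sqrt{\log\log\ell})$, which keeps $a_\ell$ inside the Schulte--Th\"ale window while forcing $\mathbb P(|\widetilde{\mathcal L}_\ell-F_\ell|>\delta a_\ell)$ to decay faster than $e^{-Ka_\ell^2}$ for every $K>0$.
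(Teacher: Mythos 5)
Your proposal follows essentially the same route as the paper: an MDP for the dominant fourth--chaos component via the Schulte--Th\"ale theorem, followed by a transfer to $\widetilde{\mathcal L}_\ell$ through exponential equivalence and Lemma \ref{thDZ}. The proof is correct in outline, but two points deserve comment. First, you misattribute the origin of the condition $a_\ell=o(\sqrt{\log\log\ell})$: feeding $\mathrm{Cum}_4(\widetilde{\mathcal M}_\ell)\approx(\log\ell)^{-2}$ into the Schulte--Th\"ale admissibility condition gives $\Delta_\ell\approx(\log\ell)^{3/7}$ and hence the much wider window $a_\ell=o((\log\ell)^{1/7})$ --- this is exactly Lemma \ref{lem1} and condition \eqref{condM} of the paper, where the sample trispectrum $\mathcal M_\ell$ is used in place of $\mathcal L_\ell[4]$ (an immaterial substitution, the two being asymptotically fully correlated). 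The restriction to $a_\ell=o(\sqrt{\log\log\ell})$ arises \emph{only} in the exponential--equivalence step; your claim that Schulte--Th\"ale ``produces exactly'' the window \eqref{cond1} is false, although this does not invalidate the argument since \eqref{cond1} implies \eqref{condM}, and your own closing remarks correctly identify the equivalence step as the bottleneck. Second, the truncation--plus--hypercontractivity scheme you sketch for the exponential equivalence is unnecessary: since $\mathbb E[|\widetilde{\mathcal L}_\ell-\widetilde{\mathcal M}_\ell|^2]=O((\log\ell)^{-1})$ by \eqref{L2equivalence}, Chebyshev's inequality alone yields
\begin{equation*}
\limsup_{\ell\to+\infty}\frac{1}{a_\ell^2}\log \mathbb P\left(\frac{1}{a_\ell}\left|\widetilde{\mathcal L}_\ell-\widetilde{\mathcal M}_\ell\right|>\delta\right)\le \limsup_{\ell\to+\infty}\frac{1}{a_\ell^2}\log \mathbb E\left[\left|\widetilde{\mathcal L}_\ell-\widetilde{\mathcal M}_\ell\right|^2\right]=-\infty
\end{equation*}
precisely when $a_\ell^2=o(\log\log\ell)$; this is the paper's one--line argument. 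Your more elaborate scheme must in any case fall back on an $L^2$ bound for the infinite high--order tail of the chaos expansion, so it buys no enlargement of the admissible window while introducing the complication (which you rightly flag) that no single hypercontractivity constant covers all chaoses.
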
 
From now on,  $\lbrace r_\ell, \ell \in \mathbb N\rbrace$ is a sequence of radii such that (see Theorem \ref{thB})
$$r_\ell \ell \to +\infty.
$$ 
\begin{theorem}\label{th2}
Let $\lbrace a_\ell, \ell \in \mathbb N\rbrace$ be any sequence of positive numbers such that, as $\ell\to +\infty$,
\begin{equation}\label{cond2}
a_\ell \to +\infty,\qquad \qquad a_\ell/\sqrt{\log \log r_\ell \ell} \to 0.
\end{equation}
The sequence of random variables 
$$\left \lbrace \widetilde{\mathcal L}_{\ell, B_{r_\ell}}/a_\ell, \ell \in \mathbb N \right \rbrace$$ satisfies a MDP with speed $a_\ell^2$ and rate function $\mathcal I(x) = x^2/ 2$, $x\in \mathbb R$.
\end{theorem}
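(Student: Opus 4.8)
The plan is to derive both MDPs by combining the Schulte--Th\"ale MDP for a single Wiener chaos with the exponential--equivalence transfer of Lemma \ref{thDZ}; the two theorems are proved identically, with $\log\log\ell$ replaced by $\log\log(r_\ell\ell)$, so I outline the argument for the shrinking--domain case of Theorem \ref{th2}. The starting point is the Wiener--It\^o chaos expansion of $\mathcal L_{\ell, B_{r_\ell}}$. As established in \cite{Tod18} (in analogy with \cite{MRW20} for the whole sphere), the projection onto the second Wiener chaos, together with all chaoses of order $\geq 6$, does not contribute to the leading order of the variance, so that the fluctuations are governed by the fourth--chaos projection $\mathcal L_{\ell, B_{r_\ell}}[4]$, in the sense that
\begin{equation*}
\frac{\Var\left(\mathcal L_{\ell, B_{r_\ell}}[4]\right)}{\Var\left(\mathcal L_{\ell, B_{r_\ell}}\right)} \longrightarrow 1, \qquad \ell\to+\infty.
\end{equation*}
Accordingly I set $M_\ell := \mathcal L_{\ell, B_{r_\ell}}[4]/\sqrt{\Var(\mathcal L_{\ell, B_{r_\ell}}[4])}$, a unit--variance element of the fourth Wiener chaos, and retain $\widetilde{\mathcal L}_{\ell, B_{r_\ell}}$ as in Theorem \ref{thB}.

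The first main step is to apply the Schulte--Th\"ale MDP to the sequence $\{M_\ell/a_\ell\}$. For this one needs the asymptotics of the fourth cumulant $\widehat\mu_\ell(4)$ of $M_\ell$: from the explicit chaotic kernels one checks, as in \cite{Tod18, MRW20}, that $|\widehat\mu_\ell(4)| \approx 1/\log(r_\ell\ell) \to 0$ (consistently with the Wasserstein rate in (\ref{Wass2})), so that $M_\ell$ is asymptotically Gaussian by the fourth moment theorem and the associated Schulte--Th\"ale threshold satisfies $\mathcal M_{|\widehat\mu_\ell(4)|}\approx \log\log(r_\ell\ell)\to+\infty$. Condition (\ref{cond2}) is then exactly the requirement $a_\ell\to+\infty$ together with $a_\ell^2 = o\big(\mathcal M_{|\widehat\mu_\ell(4)|}\big)$, under which the Schulte--Th\"ale result yields a MDP for $\{M_\ell/a_\ell\}$ with speed $a_\ell^2$ and rate function $\mathcal I(x)=x^2/2$.

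The second main step is to transfer this MDP from $M_\ell/a_\ell$ to $\widetilde{\mathcal L}_{\ell, B_{r_\ell}}/a_\ell$ via Lemma \ref{thDZ}, i.e.\ to prove that these two sequences are exponentially equivalent at speed $a_\ell^2$. Writing
\begin{equation*}
\widetilde{\mathcal L}_{\ell, B_{r_\ell}} - M_\ell = \left(\frac{\sqrt{\Var(\mathcal L_{\ell, B_{r_\ell}}[4])}}{\sqrt{\Var(\mathcal L_{\ell, B_{r_\ell}})}} - 1\right) M_\ell + \frac{R_\ell}{\sqrt{\Var(\mathcal L_{\ell, B_{r_\ell}})}},
\end{equation*}
where $R_\ell$ collects all chaotic projections of order different from four, the goal is to show that for every $\delta>0$
\begin{equation*}
\frac{1}{a_\ell^2}\log \mathbb P\left(\left|\widetilde{\mathcal L}_{\ell, B_{r_\ell}} - M_\ell\right| > \delta\, a_\ell\right) \longrightarrow -\infty.
\end{equation*}
The scalar prefactor multiplying $M_\ell$ is a deterministic $o(1)$ by the variance ratio above, so the first summand is controlled by the hypercontractive tail of the single fourth chaos $M_\ell$, namely $\mathbb P(|F|>u)\le \exp(-c\,(u/\|F\|_{L^2})^{2/q})$ for an element $F$ of the $q$--th chaos; since the effective deviation level $\delta a_\ell$ divided by the vanishing prefactor tends to infinity, this produces the required super--exponential decay in $a_\ell^2$. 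The delicate point is the normalized remainder: as $\Var(R_\ell) = \Var(\mathcal L_{\ell, B_{r_\ell}}) - \Var(\mathcal L_{\ell, B_{r_\ell}}[4]) = o\big(\Var(\mathcal L_{\ell, B_{r_\ell}})\big)$, its natural scale tends to $0$, so deviations of size $\delta a_\ell$ should again be extremely unlikely.

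I expect this last estimate to be the main obstacle, precisely because $R_\ell$ is an infinite sum over arbitrarily high Wiener chaoses, on which the hypercontractive tail exponent $2/q$ degrades as $q\to+\infty$. The remedy is to truncate the expansion at a finite order: the finitely many low--order chaoses are handled uniformly by the per--chaos tail bound, while the contribution of the high--order chaoses is absorbed using the quantitative decay of the variances $\Var(\mathcal L_{\ell, B_{r_\ell}}[2q])$ coming from the kernel estimates of \cite{Tod18}, together with a fixed--order moment (Markov) bound. Because the moderate scale satisfies $a_\ell = o\big(\sqrt{\log\log(r_\ell\ell)}\big)$ while the relevant normalized variances decay like powers of $1/\log(r_\ell\ell)$, any positive power of $\log(r_\ell\ell)$ dominates the corresponding power of $\log\log(r_\ell\ell)$, which is what forces the logarithmic probabilities to diverge to $-\infty$ after division by $a_\ell^2$. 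Once this exponential equivalence is established, Lemma \ref{thDZ} transports the MDP from $\{M_\ell/a_\ell\}$ to $\{\widetilde{\mathcal L}_{\ell, B_{r_\ell}}/a_\ell\}$, completing the proof; the argument for Theorem \ref{th1} is verbatim with $B_{r_\ell}$ replaced by the whole sphere and $\log\log(r_\ell\ell)$ replaced by $\log\log\ell$.
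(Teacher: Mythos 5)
Your overall strategy is the same as the paper's: a Schulte--Th\"ale MDP for the dominant fourth-chaos component, followed by an exponential-equivalence transfer via Lemma \ref{thDZ}. Two points are worth flagging. First, you misattribute the origin of the $\sqrt{\log\log (r_\ell\ell)}$ threshold. With $\mathrm{Cum}_4(\widetilde{\mathcal M}_{\ell,B_{r_\ell}})=O(1/\log (r_\ell\ell))$, Corollary 2 of \cite{ScTh16} only requires $a_\ell\to+\infty$ and $a_\ell=o\bigl((\log (r_\ell\ell))^{1/14}\bigr)$, as in (\ref{condAP}) of Lemma \ref{lem1bis} --- a power of $\log(r_\ell\ell)$, not of $\log\log(r_\ell\ell)$; there is no ``threshold $\approx\log\log(r_\ell\ell)$'' in that result. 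Condition (\ref{cond2}) is strictly stronger than what the first step needs, and the $\log\log$ restriction is forced entirely by the exponential-equivalence step, because the $L^2$ distance (\ref{L2palla}) decays only like $1/\log(r_\ell\ell)$ and one needs $a_\ell^{-2}\log O\bigl(1/\log(r_\ell\ell)\bigr)\to-\infty$. Since your hypothesis implies the condition actually required, this is a misstatement rather than a gap, but the accounting matters if one asks where the method saturates. Second, your exponential-equivalence argument (splitting off the variance-ratio term, hypercontractive tails chaos by chaos, truncation of the expansion) is much heavier than necessary: a single Chebyshev bound
\begin{equation*}
\mathbb P\bigl( a_\ell^{-1} |  \widetilde{\mathcal{L}}_{\ell,B_{r_\ell}} -  \widetilde{\mathcal M}_{\ell,B_{r_\ell}} | > \delta\bigr) \le \delta^{-2}a_\ell^{-2}\,\mathbb E\bigl[| \widetilde{\mathcal {L}}_{\ell,B_{r_\ell}} -  \widetilde{\mathcal M}_{\ell,B_{r_\ell}} |^2\bigr],
\end{equation*}
combined with (\ref{L2palla}) and $a_\ell^2=o(\log\log(r_\ell\ell))$, already yields (\ref{limsupbis}); this is exactly what the paper does, and it disposes of your ``main obstacle'' (the infinitely many high-order chaoses) with no truncation, since their total normalized variance is $O(1/\log(r_\ell\ell))$. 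Your route would work --- the fixed-order moment bound you invoke at the end is the step that actually closes it --- but the hypercontractive estimates buy nothing at these scalings. A final immaterial difference: the paper pivots on the sample trispectrum $\mathcal M_{\ell,B_{r_\ell}}$ rather than on the normalized projection $\mathcal L_{\ell,B_{r_\ell}}[4]$; the two are asymptotically fully correlated.
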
 
To the best of our knowledge, Theorem \ref{th1} and Theorem \ref{th2} are the first MD estimates for Lipschitz-Killing curvatures of excursion sets of Laplacian Gaussian eigenfunctions on manifolds and on shrinking domains on manifolds, respectively. 
\begin{remark}[Lipschitz-Killing curvatures]\rm
The conditions $a_\ell/\sqrt{\log \log \ell}\to 0$ in (\ref{cond1}) and $a_\ell/ \sqrt{\log \log r_\ell \ell}\to 0$ in (\ref{cond2}) are plausibly not optimal; this is a drawback of the proof technique we decided to adopt. Our choice was based on the fact that we do not have any information on the moment-generating function of the nodal length and, more importantly, on the shortness of our argument as well as on its flexibility. Indeed, our strategy can be immediately adapted to prove MD estimates for the other Lipschitz-Killing (LK) curvatures of excursion sets at any level of random spherical harmonics such as the length of level curves, the excursion area and the Euler-Poincar\'e characteristic (see e.g. \cite{CM18, MW14, Tod19} and the references therein). Actually, our approach works well independently of the underlying manifold as long as the random model is Gaussian and long-memory, for instance in the case of arithmetic random waves, see \cite{BMW18, Cam19, MPRW16, NPR19, PR18} and the references therein.
\end{remark}

\subsection{On the proofs of the main results}

The nodal length $\mathcal L_\ell$ in (\ref{nodalDef}) is a finite-variance functional of the Gaussian field $T_\ell$ (Definition \ref{defRSH}) hence 
it can be written as an orthogonal series, the so-called \emph{Wiener-\^Ito chaos expansion}, converging in $L^2(\mathbb P)$ of the form 
\begin{equation}\label{exp1}
\mathcal L_\ell = \mathbb E[\mathcal L_\ell] + \sum_{q=1}^{+\infty} \mathcal L_\ell[q],
\end{equation}
where $\mathcal L_\ell[q]$ is the orthogonal projection of $\mathcal L_\ell$ onto the so-called Wiener chaos of order $q$. The expansion in (\ref{exp1}) relies on the fact that the family of suitably normalized Hermite polynomials $\lbrace H_q, q\in \mathbb N\rbrace$
is an orthonormal basis for the space of square integrable functions on the real line with respect to the Gaussian density. Recall that 
$H_0 \equiv 1$ and
 \begin{equation}\label{hermite}
 H_{q}(t) := (-1)^q \phi^{-1}(t) \frac{d^q}{dt^q} \phi(t), \qquad t\in \mathbb R, q\in \mathbb N_{\ge 1},
 \end{equation}
 where $\phi$ denotes the standard Gaussian density. In particular $H_1(t)=1$, $H_2(t)=t^2-1$, $H_3(t)= t^3 -3t$ and $H_4(t) = 3t^4 - 6t^2 +3$. 
It turns out that
 $\mathcal L_\ell - \mathbb E[\mathcal L_\ell]$ is asymptotically equivalent, in the $L^2(\mathbb P)$-sense, to its fourth chaotic component $\mathcal L_\ell[4]$
which is moreover fully correlated, as $\ell\to +\infty$, to the \emph{sample trispectrum} of $T_\ell$. More precisely
let us define, for $\ell\in \mathbb N$,
\begin{equation}\label{M}
\mathcal M_\ell := -\frac14 \sqrt{\frac{\ell(\ell+1)}{2}} \frac{1}{4!} \int_{\mathbb S^2} H_4(T_\ell(x))\,dx;
\end{equation}
Theorem 1.2 in \cite{MRW20} states that, as $\ell\to +\infty$,
\begin{equation}\label{L2equivalence}
\mathbb E \left [ \left | \widetilde{\mathcal L}_\ell -  \widetilde{\mathcal M}_\ell \right |^2 \right ] = O\left ((\log \ell)^{-1}\right ),
\end{equation}
where 
$$\widetilde{\mathcal M}_\ell := \frac{\mathcal M_\ell}{\sqrt{\Var(\mathcal M_\ell)}}.$$
Now recall that the sequence of random variables $\lbrace \widetilde{\mathcal M}_\ell, \ell \in \mathbb N\rbrace$ lives in the fourth Wiener chaos. On that space, convergence in law to a standard Gaussian random variable can be proved  \cite{NP05, NP09} showing that its fourth cumulant goes to zero, hence
\begin{equation}\label{4cum}
\text{Cum}_4 ( \widetilde{\mathcal M}_\ell  )\to 0.
\end{equation}
Theorem 1 in \cite{ScTh16} ensures that under (\ref{4cum}) a MDP holds for the normalized sample trispectrum. See also \cite{DE13} for MDP via cumulants and \cite{Led90} for LDP for Wiener chaos. 

In this paper, as a preliminary result (Lemma \ref{lem1}), firstly we will establish a MDP for $\lbrace \widetilde{\mathcal M}_\ell/a_\ell, \ell \in \mathbb N\rbrace$ with the rate function $\mathcal I$ in Theorem \ref{thS} and speed $a_\ell^2$, whenever as $\ell\to +\infty$
\begin{equation*}\label{a1}
a_\ell \to +\infty,\qquad \qquad a_\ell/(\log\ell)^{1/7}\to 0.
\end{equation*}
Then, in order to deduce a MDP for the whole series on the right hand side of (\ref{exp1}), i.e. to establish Theorem \ref{th1}, we will prove that $\lbrace \widetilde{\mathcal M}_\ell/a_\ell, \ell\in \mathbb N\rbrace$ and $\lbrace \widetilde{\mathcal L}_\ell/a_\ell, \ell \in \mathbb N	\rbrace$ are exponentially equivalent (Definition \ref{expdef}) at speed $a_\ell^2$ provided that $a_\ell$ goes to infinity sufficiently slowly (see (\ref{cond1})) according to (\ref{L2equivalence}). Finally Lemma \ref{thDZ} will allow to conclude. 

The proof of Theorem \ref{th2} relies on the same ideas as those developed just before for the proof of Theorem \ref{th1}. 

\subsection{Plan of the paper}

In \S \ref{secWiener} we recall both the notion of Wiener chaos and the chaotic expansion (\ref{exp1}) for the nodal length of random spherical harmonics. In particular, in \S \ref{sec_red} we recall the reduction principle for nodal lengths on the sphere and on shrinking domains leading in particular to (\ref{L2equivalence}). Finally in \S \ref{sec_proofs} we give the proofs of our main results. 

\section{Nodal lengths and Wiener chaos}\label{secWiener}

\subsection{Wiener chaos}\label{sub_wiener}

It is well-known \cite[Proposition 1.4.2]{NP12} that the family $ \{H_q/\sqrt{q!}, q\in \mathbb N\}$ of suitably normalized Hermite polynomails (\ref{hermite}) is a complete orthonormal system in the space of square integrable real functions $L^2(\phi)$ with respect to the standard Gaussian measure on the real line.

Random spherical harmonics \eqref{defT} are linear combinations of i.i.d. standard Gaussian random variables $\{a_{\ell,m} : \ell = 1, 2, \dots, m=1, \dots, 2\ell+1\}$; we define accordingly the space $X$ to be the closure in $L^2(\mathbb{P})$ of $\text{lin}\{a_{\ell,m} : \ell = 1, 2, \dots, m=1, \dots, 2\ell+1\}$,
thus $X$ is a real centered Gaussian Hilbert subspace
of $L^2(\mathbb{P})$.
Now let $q\in \mathbb N$; the $q$-th Wiener chaos $C_q$ associated with $X$ is defined as the closure in $L^2(\mathbb{P})$ of all real finite linear combinations of random variables of the form
$$
H_{p_1}(x_1) H_{p_2}(x_2)\cdots H_{p_k}(x_k)
$$
for $k\in \mathbb N_{\ge 1}$, where $p_1,...,p_k \in \mathbb N$ satisfy $p_1+\cdots+p_k = q$, and $(x_1, x_2, \dots, x_k)$ is a standard Gaussian vector extracted
from $X$ (plainly, $C_0 = \mathbb{R}$ and $C_1 = X$). Note that (from (\ref{defT})) for every $\ell$ the random fields $T_{\ell}$ and $\nabla T_\ell$ viewed as collections of
Gaussian random variables
indexed by $x\in \mathbb S^2$ are all lying in $X$.

Taking into account the orthonormality and completeness of $\{H_q/\sqrt{q!}, q\in \mathbb N\}$ in $L^2(\phi)$, together with a monotone class argument (see e.g. \cite[Theorem 2.2.4]{NP12}), one can prove that $C_q \,\bot\, C_{q'}$ in  $L^2(\mathbb{P})$ whenever $q\neq q'$, and moreover
\begin{equation*}
L^2_X(\mathbb{P}) = \bigoplus_{q=0}^\infty C_q,
\end{equation*}
where $L^2_X(\mathbb P) := L^2(\Omega, \sigma(X), \mathbb P)$, that is, every finite-variance real-valued functional $F$ of $X$ admits a unique representation as a series, converging in $L^2_X(\mathbb P)$, of the form
\begin{equation}\label{e:chaos2}
F = \sum_{q=0}^\infty F[q],
\end{equation}
$F[q]:=\text{proj}(F \, | \, C_q)$ being the orthogonal projection of $F$ onto $C_q$ (in particular, $F[0]= \E [F]$).
 For a complete discussion on Wiener chaos see \cite[\S 2.2]{NP12} and the references therein. 

\subsection{Nodal length: chaos expansion}

Let $B\subseteq \mathbb S^2$ be a ``nice" subset of the sphere. For our purpose it suffices to take $B$ as the whole sphere or a spherical cap. The nodal length $\mathcal L_{\ell, B} := \text{length}(T_\ell^{-1}(0) \cap B)$ in $B$ at frequency $\ell$ (plainly, $\mathcal L_{\ell, \mathbb S^2}  \equiv \mathcal L_\ell$) 
can be formally written as 
\begin{equation}\label{rapSphere}
\mathcal L_{\ell, B} = \int_{B} \delta_0(T_\ell(x)) \|\nabla T_\ell(x)\|\,dx,
\end{equation}
where $\delta_0$ stands for the Dirac mass in $0$, $\nabla T_\ell$ is the gradient field and $\| \cdot \|$ denotes the Euclidean norm in $\R^2$. Indeed, let us consider the $\eps$-approximating random variable 
$$
\mathcal L_{\ell,B}^\eps := \frac{1}{2\eps} \int_{B} \chi_{[-\eps, \eps]}(T_\ell(x))\, \| \nabla T_\ell(x)\|\, dx,
$$
where $0 < \eps\ll 1$ and $\chi_{[-\eps, \eps]}$ denotes the indicator function of the interval $[-\eps, \eps]$. It is possible to prove that 
\begin{equation*}
\lim_{\eps \to 0} \mathcal L_{\ell,B}^\eps = \text{length}(T_\ell^{-1}(0)\cap B)
\end{equation*}
both $\mathbb P-$a.s. and in $L^2(\P)$, see \cite{MRW20, NPR19}, thus justifying \paref{rapSphere}. 
In particular, $\mathcal L_{\ell,B}\in L^2_X(\mathbb P)$.
The integral representation \paref{rapSphere} can be equivalently written as 
\begin{equation}\label{rapSphere2}
\mathcal L_{\ell, B} 
=\sqrt{\frac{\ell(\ell+1)}{2}}\int_{B} \delta_0(T_\ell(x)) \|\widetilde \nabla T_\ell(x)\|\,dx,
\end{equation}
where  $\widetilde \nabla$ is the normalized gradient, i.e. $\widetilde \nabla:= \nabla / \sqrt{\frac{2}{\ell(\ell +1)}}$ (thus pointwise the components of the normalized gradient have unit variance, see \S 3.2.1 in \cite{MRW20} for details). 
Let us now recall the chaotic expansion \paref{e:chaos2} for $\mathcal L_{\ell, B}$ 
\begin{equation}\label{chaos_decomp}
\mathcal L_{\ell,B} = \sum_{q=0}^{+\infty} \mathcal L_{\ell,B}[2q],
\end{equation}
where $\mathcal L_{\ell,B}[2q]$ denotes the orthogonal projection of $\mathcal L_{\ell,B}$ onto  $C_{2q}$. (Note that projections on odd chaoses vanish since the integrand functions in \paref{rapSphere2} are both even.)
In \cite[$\S2$]{MRW20} the terms of the series on the right hand side of \paref{chaos_decomp} 
are explicitly given (see also \cite{MPRW16}). 
Let us introduce the two sequences of real numbers $\lbrace \beta_{2k}\rbrace_{k=0}^{+\infty}$ and $\lbrace \alpha_{2n,2m}\rbrace_{n,m=0}^{+\infty}$ corresponding to the (formal) chaotic coefficients of the Dirac mass at $0$ and the Euclidean norm respectively: for $k, n,m\in \mathbb N$ 
\begin{equation*}
\beta_{2k} := \frac{1}{\sqrt{2\pi}} H_{2k}(0),\qquad \qquad \alpha _{2n,2m}:=\sqrt{\frac{\pi }{2}}\frac{(2n)!(2m)!}{n!m!}\frac{1}{2^{n+m}%
}p_{n+m}\left(\frac{1}{4}\right),
\end{equation*}
where $p_{N}$ is the swinging factorial coefficient
$
p_{N}(x):=\sum_{j=0}^{N}(-1)^{N+j}\left(
\begin{array}{c}
N \\
j%
\end{array}%
\right) \frac{(2j+1)!}{(j!)^{2}}x^{j} 
$, $x\in \mathbb R$.
The $2q$-th chaotic projection of the nodal length restricted to $B$ is
\begin{equation}\label{chaos_decomp_sphere}
\begin{split}
 \mathcal L_{\ell,B} [2q] = &\sqrt{\frac{\ell (\ell
+1)}{2}} \sum_{u=0}^{q}\sum_{k=0}^{u}\frac{\alpha
_{2k,2u-2k}\beta _{2q-2u}}{(2k)!(2u-2k)!(2q-2u)!} \\
&\times \int_{B}H_{2q-2u}(T_{\ell }(x))H_{2k}(\widetilde \partial
_{1;x}T_{\ell }(x))H_{2u-2k}(\widetilde \partial
_{2;x}T_{\ell }(x))\,dx,
\end{split}
\end{equation}
where we use spherical coordinates (colatitude $\theta ,$ longitude $%
\varphi $) and for $x=(\theta _{x},\varphi _{x})$ we are using the notation
\begin{equation*}
\widetilde \partial _{1;x}=\left. (\ell(\ell+1)/2)^{-1/2}\cdot \frac{\partial }{\partial \theta }\right\vert
_{\theta =\theta _{x}},\qquad \widetilde \partial _{2;x}= (\ell(\ell+1)/2)^{-1/2}\cdot\left. \frac{1}{\sin \theta }%
\frac{\partial }{\partial \varphi }\right\vert _{\theta =\theta _{x},\varphi
=\varphi _{x}}.
\end{equation*}
Obviously $\mathcal L_{\ell,B}[0] = \frac{\text{area}(B)}{2\sqrt{2}} \sqrt{\ell(\ell+1)}  = \mathbb E[\mathcal L_{\ell,B}]$. 

\subsection{Nodal lengths: reduction principles}\label{sec_red}

\subsubsection{On the sphere}\label{sub_sphere}

From (\ref{chaos_decomp_sphere}) for $q=1$, an application of Green's formula yields  (see \cite{MR19} and the references therein) 
\begin{equation}
\mathcal L_\ell[2] = 0.
\end{equation}
Let us consider, as in (\ref{M}), the sample trispectrum 
\begin{equation*}
\mathcal M_\ell := -\frac14 \sqrt{\frac{\ell(\ell+1)}{2}}\frac{1}{4!} \int_{\mathbb S^2} H_4(T_\ell(x))\,dx;
\end{equation*}
from the properties of Hermite polynomials recalled in \S \ref{sub_wiener} and by (\ref{hilb}) we have
\begin{equation}\label{eqVarM}
\mathbb E[\mathcal M_\ell] = 0, \qquad \qquad
\Var(\mathcal M_\ell) = \frac{1}{32} \log \ell + O(1), \text{ as } \ell\to +\infty
\end{equation} (see \cite[Lemma 3.2]{MW14}), cf. (\ref{eqVar}). Moreover, in \cite{MRW20} it has been shown that the fourth chaotic projection $\mathcal L_\ell[4]$ (see (\ref{chaos_decomp_sphere}) for $q=2$) is asymptotically (as $\ell\to +\infty$) equivalent, in the $L^2(\mathbb P)$-sense, to $\mathcal M_\ell$, and that the tail $\sum_{q\ge 3} \mathcal L_\ell[2q]$ of the chaotic series (\ref{chaos_decomp})  is negligible. To be more precise, 
\begin{equation}\label{red1}
\mathbb E\left [ \left | \widetilde{\mathcal L}_\ell -\widetilde{\mathcal M}_\ell   \right |^2 \right ] = O\left ( (\log \ell)^{-1}\right ),\quad \text{ as } \ell\to+\infty,
\end{equation}
which is (\ref{L2equivalence}). An application of the Fourth Moment Theorem \cite{NP05, NP09} gives \cite{MW14}
\begin{equation*}
d_W \left ( \widetilde{\mathcal M}_\ell, Z\right ) \le \sqrt{\text{Cum}_4 (\widetilde{\mathcal M}_\ell  )} = O \left ( (\log \ell)^{-1} \right ),\qquad \text{ as } \ell\to +\infty,
\end{equation*}
that together with the estimate (\ref{red1}) proves (\ref{Wass1}). 

\subsubsection{On shrinking spherical domains} \label{shrinking}

Let us define the local sample trispectrum as 
\begin{equation}\label{Mloc}
\mathcal M_{\ell,B_{r_\ell}} := -\frac14 \sqrt{\frac{\ell(\ell+1)}{2}}\frac{1}{4!} \int_{B_{r_\ell}} H_4(T_\ell(x))\,dx.
\end{equation}
It has zero mean and variance (recall that $r_\ell \ell\to +\infty$) given by
\begin{equation}\label{eqVarL}
\Var(\mathcal M_{\ell,B_{r_\ell}}) = \frac{1}{256}r_\ell^2 \log r_\ell \ell + O(r_\ell^2)
\end{equation}
 (see \cite[Proposition 3.4]{Tod18}).
In \cite[Proposition 3.3]{Tod18} the asymptotic full correlation between the local nodal length $\mathcal{L}_{\ell,B_{r_\ell}}$ and the local sample trispectrum has been established and, in view of the orthogonality of the projections, this entails that $\mathcal{M}_{\ell,B_{r_\ell}}$ is the leading term of the chaos expansion of $\mathcal{L}_{\ell,B_{r_\ell}}$ in (\ref{chaos_decomp_sphere}). Indeed, all the other projections are proved to be $O(r_\ell^2)$ in the $L^2-$sense (see the supplement article to \cite{Tod18}, Appendix C) and hence we can conclude that 
\begin{equation}\label{L2palla}
\mathbb E\left [ \left | \widetilde{\mathcal L}_{\ell,B_{r_\ell}} -\widetilde{\mathcal M}_{\ell,B_{r_\ell}}   \right |^2 \right ] = O\left ( (\log r_\ell \ell)^{-1}\right ).
\end{equation}
Hence the Fourth Moment Theorem \cite{NP05, NP09} gives \cite[Lemma 5.4]{Tod18}
\begin{equation*}
d_W \left ( \widetilde{\mathcal M}_{\ell, B_{r_\ell}}, Z\right )  \le \sqrt{\text{Cum}_4 (\widetilde{\mathcal M}_{\ell, B_{r_\ell}}  )} = O \left ( (\log r_\ell \ell)^{-1/2} \right ) 
\end{equation*}
that together with the estimate (\ref{L2palla}) proves (\ref{Wass2}). 

\section{Proofs of the main results}\label{sec_proofs}

\subsection{Proof of Theorem \ref{th1}}

Bearing in mind \S \ref{sub_sphere} we start with an auxiliary lemma of independent interest which provides the MDP for the sample trispectrum on the sphere. 

\begin{lemma}\label{lem1}
Let $\lbrace a_\ell, \ell\in \mathbb N\rbrace$ be any sequence of positive numbers such that, as $\ell\to +\infty$,
\begin{equation}\label{condM}
a_\ell \to +\infty,\qquad \qquad a_\ell/(\log\ell)^{1/7}\to 0.
\end{equation}
Then the sequence of random variables $\lbrace \widetilde{\mathcal M}_\ell/ a_\ell, \ell\in \mathbb N\rbrace$ satisfies a MDP with speed $a_\ell^2$ and rate function $\mathcal I(x) =x^2/2$, $x\in \mathbb R$.
\end{lemma}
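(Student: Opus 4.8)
The plan is to obtain the lemma as a direct application of Theorem~1 in \cite{ScTh16}, which provides a quantitative MDP for a normalized sequence of random variables living in a single, fixed Wiener chaos, the admissible growth of the scaling $a_\ell$ being governed by the speed at which the fourth cumulant decays. Two inputs are thus needed: that $\{\widetilde{\mathcal M}_\ell,\ell\in\mathbb N\}$ is a sequence of centred, unit-variance elements of one Wiener chaos, and a quantitative bound on $\text{Cum}_4(\widetilde{\mathcal M}_\ell)$. The first is immediate from (\ref{M}): as $H_4$ has degree four, $\mathcal M_\ell$ belongs to the fourth Wiener chaos $C_4$, and $\widetilde{\mathcal M}_\ell=\mathcal M_\ell/\sqrt{\Var(\mathcal M_\ell)}$ satisfies $\mathbb E[\widetilde{\mathcal M}_\ell]=0$, $\Var(\widetilde{\mathcal M}_\ell)=1$ for every $\ell$ by construction.

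For the second input I would record the quantitative counterpart of (\ref{4cum}). The fourth-moment bound recalled in \S\ref{sub_sphere}, namely $d_W(\widetilde{\mathcal M}_\ell,Z)\le\sqrt{\text{Cum}_4(\widetilde{\mathcal M}_\ell)}=O((\log\ell)^{-1})$, yields
\begin{equation*}
\text{Cum}_4(\widetilde{\mathcal M}_\ell)=O\!\left((\log\ell)^{-2}\right),\qquad \ell\to+\infty,
\end{equation*}
which is also the explicit fourth-cumulant estimate for the sample trispectrum obtained in \cite{MW14}.

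With these two facts I would invoke \cite[Theorem~1]{ScTh16} with chaos order $q=4$. The mechanism behind that theorem is that, for elements of the $q$-th Wiener chaos, the cumulants obey a Statulevi\v{c}ius-type bound $|\text{Cum}_r(\widetilde{\mathcal M}_\ell)|\le (r!)^{1+\gamma}\Delta_\ell^{-(r-2)}$ for all $r\ge3$, with exponent $\gamma=q-1=3$ and control parameter $\Delta_\ell\asymp\text{Cum}_4(\widetilde{\mathcal M}_\ell)^{-1/2}$; the corresponding MDP at speed $a_\ell^2$ and rate $\mathcal I(x)=x^2/2$ then holds as soon as $a_\ell\to+\infty$ and $a_\ell=o\big(\Delta_\ell^{1/(1+2\gamma)}\big)=o\big(\Delta_\ell^{1/7}\big)$. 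Since $\Delta_\ell\asymp\text{Cum}_4(\widetilde{\mathcal M}_\ell)^{-1/2}\gtrsim\log\ell$, this growth condition is implied by $a_\ell=o\big((\log\ell)^{1/7}\big)$, which is precisely (\ref{condM}); combined with $a_\ell\to+\infty$ it verifies all hypotheses of \cite[Theorem~1]{ScTh16} and gives the result.

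The main obstacle I anticipate is this final matching rather than any genuinely hard estimate: one has to reconcile the normalization and cumulant conventions of \cite{ScTh16} with ours and, above all, confirm that the fixed fourth chaos produces exactly the growth exponent $1/(1+2\gamma)=1/7$ and that the controlling quantity is $\text{Cum}_4$ (lower-order cumulants being dominated by it on a fixed chaos). Once the rate $\text{Cum}_4(\widetilde{\mathcal M}_\ell)=O((\log\ell)^{-2})$ is in place, the identification of the chaos order, the centring and unit-variance normalization, and the substitution leading to the threshold $(\log\ell)^{1/7}$ are all routine.
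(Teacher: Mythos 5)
Your proposal is correct and follows essentially the same route as the paper: both rest on $\mathcal M_\ell\in C_4$, the estimate $\mathrm{Cum}_4(\widetilde{\mathcal M}_\ell)=\mathrm{Cum}_4(\mathcal M_\ell)/\Var(\mathcal M_\ell)^2\approx(\log\ell)^{-2}$ from \cite{MW14}, and the Schulte--Th\"ale MDP for a fixed Wiener chaos, arriving at the same threshold $a_\ell=o\bigl(\mathrm{Cum}_4(\widetilde{\mathcal M}_\ell)^{-1/14}\bigr)=o\bigl((\log\ell)^{1/7}\bigr)$. The only cosmetic difference is that you unpack the Statulevi\v{c}ius-type cumulant condition behind \cite[Theorem~1]{ScTh16} with $\gamma=3$ and $\Delta_\ell\asymp\mathrm{Cum}_4^{-1/2}$, whereas the paper directly cites \cite[Corollary~2]{ScTh16} with its $\Delta_\ell:=(\sqrt{\mathrm{Cum}_4})^{-3/7}$ normalization; the two bookkeepings coincide.
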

\begin{proof}
The random variable $\mathcal M_\ell$ belongs to the fourth Wiener chaos $C_4$ for each $\ell\in \mathbb N$. 
Recalling (\ref{eqVarM}),
from \cite[Lemma 3.3]{MW14}, we have that, as $\ell\to +\infty$, $\text{Cum}_4(\mathcal M_\ell) \approx 1$. 
Hence we have, as $\ell\to +\infty$, 
\begin{equation}\label{cum}
\text{Cum}_4(\widetilde{\mathcal M}_\ell) = \frac{\text{Cum}_4(\mathcal M_\ell)}{\Var(\mathcal M_\ell)^2} \approx \frac{1}{\log^2\ell}.
\end{equation}
Now let $a_\ell$ be a positive sequence such that, as $\ell \to +\infty$,
\begin{equation}\label{cumM}
a_\ell \to +\infty,\qquad \qquad \qquad a_\ell/ (\Delta_\ell)^{1/3} \to 0,
\end{equation}
where 
\begin{equation}\label{Delta}
\Delta_\ell := \left ( \sqrt{\text{Cum}_4  (\widetilde{\mathcal M}_\ell )}\right )^{-3/7} \approx (\log \ell)^{3/7}.
\end{equation}
Note that (\ref{cumM}) is equivalent to (\ref{condM}).
Corollary 2 in \cite{ScTh16} ensures that the sequence $\widetilde{\mathcal M}_\ell/a_\ell$ satisfies a MDP with speed $a_\ell^2$ and rate function $\mathcal I(x) = x^2/ 2$, $x\in \mathbb R$.

\end{proof}
\begin{remark}\label{remCP}\rm 
Our argument leads to further results, namely expansions \`a la Cram\'er-Petrov \cite{RSS78} for the sample trispectrum. We state here the result in a simplified form taken from Theorem 5 (i) in \cite{ScTh16}: there exist universal constants $c_0, c_1, c_2 >0$ such that for $\Delta_\ell \ge c_0$, $0\le x \le c_1 \Delta_\ell^{1/3}$ ($\Delta_\ell$ is defined as in (\ref{Delta})) it holds that 
\begin{equation*}
\left | \log \frac{\mathbb P(\widetilde{\mathcal M}_\ell \ge x)}{1 - \Phi(x)} \right | \le c_2 \frac{1+x^3}{\Delta_\ell^{1/3}},\qquad \left |  \log \frac{\mathbb P(\widetilde{\mathcal M}_\ell \le -x)}{ \Phi(-x)} \right |  \le c_2 \frac{1+x^3}{\Delta_\ell^{1/3}},
\end{equation*}
where $\Phi$ denotes the standard Gaussian cumulative distribution function. An anonymous referee raised the interesting question whether expansions \`a la Cram\'er-Petrov hold for nodal lengths on the sphere and in shrinking spherical domains. We leave this as a topic for future research.
\end{remark}
\begin{proof}[Proof of Theorem \ref{th1}]
We want to combine Theorem \ref{thDZ} and Lemma \ref{lem1} for every speed $a_\ell$ satisfying (\ref{cond1}); note that $a_\ell/\sqrt{\log \log \ell} \to 0$ implies $a_\ell/(\log \ell)^{1/7}\to 0$ (see (\ref{condM})). In order to do so, we have to check that for every $\delta >0$
\begin{equation}\label{limsup}
\limsup_{\ell\to +\infty}\frac{1}{a_\ell^{2}} \log \mathbb P( a_\ell^{-1} |  \widetilde{\mathcal {L}}_\ell -  \widetilde{\mathcal M}_\ell | > \delta) = -\infty,
\end{equation}
i.e., that the sequences of random variables $\lbrace  \widetilde{\mathcal L}_\ell /a_\ell, \ell\in \mathbb N\rbrace$ and $\lbrace  \widetilde{\mathcal {M}}_\ell/a_\ell, \ell\in \mathbb N\rbrace$ are exponentially equivalent (Definition \ref{expdef}) at speed $a_\ell^2$. 
Thanks to Markov inequality we have 
\begin{equation*}
\begin{split}
\mathbb P( a_\ell^{-1} |  \widetilde{\mathcal L}_\ell  -  \widetilde{\mathcal M}_\ell | > \delta) \le \frac{a_\ell^{-2} \mathbb E[| \widetilde{\mathcal L}_\ell  -  \widetilde{\mathcal M}_\ell |^2]}{\delta^2}
\end{split}
\end{equation*} 
so that 
\begin{equation}\label{app1}
\begin{split}
\limsup_{\ell\to +\infty}a_\ell^{-2} \log \mathbb P( a_\ell^{-1} |   \widetilde{\mathcal L}_\ell -  \widetilde{\mathcal M}_\ell | > \delta) &\le \limsup_{\ell\to +\infty}a_\ell^{-2} \log \frac{a_\ell^{-2} \mathbb E[| \widetilde{\mathcal L}_\ell  -  \widetilde{\mathcal M}_\ell |^2]}{\delta^2}\cr
&= \limsup_{\ell\to +\infty}a_\ell^{-2} \log  \mathbb E[| \widetilde{\mathcal L}_\ell  -  \widetilde{\mathcal M}_\ell |^2].
\end{split}
\end{equation} 
Plugging (\ref{L2equivalence}) into (\ref{app1}) we get (\ref{limsup}) whenever $a_\ell = o(\sqrt{\log \log \ell})$ as in (\ref{cond1}).

\end{proof}

\subsection{Proof of Theorem \ref{th2}}
Here we refer to \S \ref{shrinking} and we follow the same lines of the proof of Theorem \ref{th1}. We first prove the following lemma which is of independent interest.

\begin{lemma}\label{lem1bis}
	Let $\lbrace a_\ell, \ell\in \mathbb N\rbrace$ be any sequence of positive numbers such that, as $\ell\to +\infty$,
	\begin{equation}\label{condAP}
	a_\ell \to +\infty,\qquad \qquad a_\ell/(\log r_\ell \ell)^{1/14} \to 0.
	\end{equation}
	Then the sequence of random variables $\lbrace \widetilde{\mathcal M}_{\ell, B_{r_\ell}}/ a_\ell, \ell\in \mathbb N\rbrace$ satisfies a MDP with speed $a_\ell^2$ and rate function $\mathcal I(x)=x^2/2$, $x\in \mathbb R$.
\end{lemma}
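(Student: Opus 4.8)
The plan is to mimic verbatim the proof of Lemma \ref{lem1}, replacing the global sample trispectrum $\mathcal M_\ell$ by the local one $\mathcal M_{\ell, B_{r_\ell}}$ from (\ref{Mloc}) and feeding in the variance and fourth-cumulant estimates collected in \S\ref{shrinking}. First I would observe that, since $\mathcal M_{\ell, B_{r_\ell}}$ is, up to a deterministic prefactor, the integral over $B_{r_\ell}$ of $H_4(T_\ell(x))$ and each $T_\ell(x)$ is a standard Gaussian element of $X$, the random variable $\mathcal M_{\ell, B_{r_\ell}}$ lives in the fourth Wiener chaos $C_4$ for every $\ell$. This places us precisely in the framework of the Schulte--Th\"ale Moderate Deviation result for sequences supported on a fixed chaos.

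Next I would record the decay rate of the normalized fourth cumulant. From the Fourth Moment Theorem application recalled in \S\ref{shrinking} (see \cite[Lemma 5.4]{Tod18}), one has $\sqrt{\text{Cum}_4(\widetilde{\mathcal M}_{\ell, B_{r_\ell}})} = O\left((\log r_\ell \ell)^{-1/2}\right)$, equivalently $\text{Cum}_4(\widetilde{\mathcal M}_{\ell, B_{r_\ell}}) = O\left((\log r_\ell \ell)^{-1}\right)$. Setting, in analogy with (\ref{Delta}),
$$\Delta_\ell := \left( \sqrt{\text{Cum}_4(\widetilde{\mathcal M}_{\ell, B_{r_\ell}})} \right)^{-3/7},$$
this upper bound yields the lower bound $\Delta_\ell \gtrsim (\log r_\ell \ell)^{3/14}$, hence $\Delta_\ell^{1/3} \gtrsim (\log r_\ell \ell)^{1/14}$, and in particular $\Delta_\ell \to +\infty$ since $r_\ell \ell \to +\infty$.

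I would then verify that hypothesis (\ref{condAP}) transfers to the scaling condition required by Schulte and Th\"ale. Because $a_\ell/(\log r_\ell \ell)^{1/14} \to 0$ and $\Delta_\ell^{1/3} \gtrsim (\log r_\ell \ell)^{1/14}$, we obtain $a_\ell/\Delta_\ell^{1/3} \to 0$; together with $a_\ell \to +\infty$ this is exactly the pair of conditions under which Corollary 2 in \cite{ScTh16} applies, giving the claimed MDP for $\{\widetilde{\mathcal M}_{\ell, B_{r_\ell}}/a_\ell\}$ with speed $a_\ell^2$ and rate function $\mathcal I(x) = x^2/2$.

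The step requiring the most care is the bookkeeping of exponents: one must check that the $1/14$ in (\ref{condAP}) is precisely $\tfrac12 \times \tfrac37 \times \tfrac13$, i.e. that the square root of the cumulant decaying like $(\log r_\ell \ell)^{-1/2}$ on the cap---rather than like $(\log \ell)^{-1}$ on the whole sphere---is what produces the exponent $1/14$ in place of the $1/7$ of Lemma \ref{lem1}. Beyond this, I do not expect a genuine obstacle: the substantive analytic inputs (the variance (\ref{eqVarL}) and the fourth-cumulant bound) are already supplied by \cite{Tod18}, and only the \emph{upper} bound on the cumulant---equivalently the lower bound on $\Delta_\ell$---is needed to pass from (\ref{condAP}) to the Schulte--Th\"ale scaling hypothesis, so the two-sided estimate $\approx$ used in Lemma \ref{lem1} is not required here.
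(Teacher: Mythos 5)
Your proposal is correct and follows essentially the same route as the paper: place $\mathcal M_{\ell,B_{r_\ell}}$ in the fourth Wiener chaos, bound $\operatorname{Cum}_4(\widetilde{\mathcal M}_{\ell,B_{r_\ell}})=O\left((\log r_\ell\ell)^{-1}\right)$ via \cite[Lemma 5.4]{Tod18} and (\ref{eqVarL}), and check that (\ref{condAP}) implies the Schulte--Th\"ale scaling condition $a_\ell/\Delta_{\ell,r_\ell}^{1/3}\to 0$ before invoking their Corollary 2. The only cosmetic difference is that you quote the normalized cumulant bound directly from the Fourth Moment Theorem display in \S\ref{shrinking} instead of recomputing it from the unnormalized cumulant and the variance, and your observation that only the one-sided (upper) cumulant bound is needed matches the paper, which indeed states only an implication here rather than the equivalence used in Lemma \ref{lem1}.
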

\begin{proof}
	The random variable $\mathcal{M}_{\ell, {B_{r_\ell}}}$ in (\ref{Mloc}) belongs to the fourth Wiener chaos for each $\ell\in \mathbb N$. Recalling (\ref{eqVarL}) and that from \cite[Lemma 5.4]{Tod18}, as $\ell\to +\infty$, $\text{Cum}_4(\mathcal{M}_{\ell, B_{r_\ell}}) =O( r_\ell^4 \log r_\ell \ell)$,
 we have 
	\begin{equation}\label{cumbis}
	\text{Cum}_4(\widetilde{\mathcal M}_{ \ell, B_{r_\ell}}) = \frac{\text{Cum}_4(\mathcal{M}_{\ell, B_{r_\ell}})}{\Var(\mathcal{M}_{\ell, B_{r_\ell}})^2} =O\left ( \frac{1}{\log r_\ell \ell} \right ).
	\end{equation}
	Now let $a_\ell$ be a positive sequence such that, as $\ell \to +\infty$,
	\begin{equation}\label{cumAP}
	a_\ell \to +\infty,\qquad \qquad \qquad a_\ell/ (\Delta_{\ell, r_\ell})^{1/3} \to 0,
	\end{equation}
	where 
	$$
	\Delta^{-1}_{\ell, r_\ell} := \left ( \sqrt{\text{Cum}_4 (\widetilde{\mathcal M}_{ \ell, B_{r_\ell}}  )}\right )^{3/7} = O\left (  (\log r_\ell \ell)^{-3/14} \right),\ \text{ as } \ell\to +\infty.
	$$
	Note that (\ref{condAP}) implies (\ref{cumAP}). It follows that,
as in the proof of Lemma \ref{lem1}, Corollary 2 in \cite{ScTh16} ensures that the sequence $\lbrace \widetilde{\mathcal M}_{\ell,B_{r_\ell}}/a_\ell, \ell \in \mathbb N\rbrace$ satisfies a MDP with speed $a_\ell^2$ and rate function $\mathcal I(x) = x^2/ 2$.

\end{proof}

Analogous results as those in Remark \ref{remCP} hold for the sample trispectrum restricted to a shrinking ball. 

\begin{proof}[Proof of Theorem \ref{th2}] Along the same lines of the proof of Theorem \ref{th1} we want to combine Theorem \ref{thDZ} and Lemma \ref{lem1bis} for every speed $a_\ell$ satisfying (\ref{cond2}); note that $a_\ell/\sqrt{\log \log r_\ell \ell} \to 0$ implies $a_\ell/(\log r_\ell \ell)^{1/14}\to 0$ (see (\ref{condAP})). To this aim, we have to check that for every $\delta >0$
	\begin{equation}\label{limsupbis}
	\limsup_{\ell\to +\infty}\frac{1}{a_\ell^{2}} \log \mathbb P( a_\ell^{-1} |  \widetilde{\mathcal{L}}_{\ell,B_{r_\ell}} -  \widetilde{\mathcal M}_{\ell,B_{r_\ell}} | > \delta) = -\infty. 
	\end{equation}
Applying Markov inequality we have 
	\begin{equation*}
	\begin{split}
	\mathbb P( a_\ell^{-1} |  \widetilde{\mathcal{L}}_{\ell,B_{r_\ell}} -  \widetilde{\mathcal M}_{\ell,B_{r_\ell}} | > \delta) \le \frac{a_\ell^{-2} \mathbb E[| \widetilde{\mathcal {L}}_{\ell,B_{r_\ell}} -  \widetilde{\mathcal M}_{\ell,B_{r_\ell}} |^2]}{\delta^2}
	\end{split}
	\end{equation*} 
and hence
	\begin{equation}\label{app4}
	\begin{split}
	\limsup_{\ell\to +\infty}a_\ell^{-2} \log \mathbb P( a_\ell^{-1} |   \widetilde{\mathcal {L}}_{\ell,B_{r_\ell}} -  \widetilde{\mathcal M}_{\ell,B_{r_\ell}} | > \delta) &\le \limsup_{\ell\to +\infty}a_\ell^{-2} \log \frac{a_\ell^{-2} \mathbb E[| \widetilde{\mathcal {L}}_{\ell,B_{r_\ell}} -  \widetilde{\mathcal M}_{\ell,B_{r_\ell}} |^2]}{\delta^2}\cr
	&= \limsup_{\ell\to +\infty}a_\ell^{-2} \log  \mathbb E[| \widetilde{\mathcal {L}}_{\ell,B_{r_\ell}} -  \widetilde{\mathcal M}_{\ell,B_{r_\ell}}|^2].
	\end{split}
	\end{equation} 
	Using (\ref{L2palla}) in (\ref{app4}) and taking $a_\ell = o(\sqrt{\log \log r_\ell	\ell})$, (\ref{limsupbis}) holds. 
	
\end{proof}

\subsection*{Acknowledgements} 

The authors would like to thank an anonymous referee for his/her valuable comments which improved the quality of the present work.
C.M. has been supported by MIUR Excellence Department Project awarded to the Department of Mathematics, 
University of Rome Tor Vergata (CUP E83C18000100006) and by GNAMPA-INdAM (project: 
\emph{Stime asintotiche: principi di invarianza e grandi deviazioni}). M.R. has been supported by 
GNAMPA-INdAM (project: \emph{Propriet\`a analitiche e geometriche di campi aleatori}) and the 
ANR-17-CE40-0008 project \emph{Unirandom}. A.P.T. has been funded by the German Research 
Foundation (DFG) via RTG 2131 and by GNAMPA-INdAM (project: 
\emph{Stime asintotiche: principi di invarianza e grandi deviazioni}).


\begin{thebibliography}{99}

\bibitem[BMV07]{BMV07} P. Baldi, D. Marinucci, V. Varadarajan (2007). On the characterization of isotropic Gaussian fields on homogeneous spaces of compact groups. \emph{Electronic Communications in Probability}, 12, 29, 291--302.

\bibitem[BMW20]{BMW18} J. Benatar, D. Marinucci, I. Wigman (2020). Planck-Scale Distribution of Nodal Length of Arithmetic Random Waves. \emph{Journal d'Analyse Math\'ematique}, https://doi.org/10.1007/s11854-020-0114-7. 

\bibitem[Ber85]{Ber85} P. B\'erard (1985). Volume des ensembles nodaux des fonctions propres du laplacien. \emph{Bony-Sjostrand-Meyer seminar}, 1984--1985, Exp. No. 14 , 10 pp.,\'Ecole Polytech., Palaiseau.

\bibitem[Ber77]{Ber77} M. V. Berry (1977). Regular and irregular semiclassical wavefunctions. \emph{Journal of Physics A: Mathematical and Theoretical 10}, 12:2083--2091. 

\bibitem[Cam19]{Cam19} V. Cammarota (2019). Nodal area distribution for arithmetic random waves.  \emph{Transactions of the American Mathematical Society}, 372,  3539--3564.

\bibitem[CM18]{CM18} V. Cammarota, D. Marinucci (2018). A quantitative Central Limit Theorem for the Euler-Poincar\'e characteristic of random spherical eigenfunctions. \emph{Annals of Probability}, 46, 6, 3188--3228.

\bibitem[CM20]{CM20+} V. Cammarota, D. Marinucci (2020). A Reduction Principle for the Critical Values of Random Spherical Harmonics. \emph{Stochastic Processes and their Applications}, 130, 4, 2433--2470.


\bibitem[DZ98]{DZ98} A. Dembo, O. Zeitouni (1998). \emph{Large Deviations Techniques and Applications}, 2nd ed. New York:
Springer.

\bibitem[DE13]{DE13} H. D\"oring, P. Eichelsbacher (2013). Moderate Deviations via Cumulants. \emph{Journal of Theoretical Probability}, 26:360--385.

\bibitem[Led90]{Led90} M. Ledoux (1990). A note on large deviations for Wiener chaos. \emph{S\'eminaire de Probabilit\'es}, XXIV, 1988/89, 1–14, 
Lecture Notes in Math., 1426, Springer, Berlin.

\bibitem[MP11]{MPbook} D. Marinucci, G. Peccati (2011). \emph{Random Fields on the Sphere. Representation, Limit Theorems and Cosmological Applications}. Lecture Notes of the London Mathematical Society, n. 389. Cambridge University Press.

\bibitem[MPRW16]{MPRW16} D. Marinucci, G. Peccati, M. Rossi, I. Wigman (2016). Non-universality of nodal length distribution for arithmetic random waves. \emph{Geometric and Functional Analysis}, 26, 3, 926--960.

\bibitem[MR19]{MR19} D. Marinucci, M. Rossi (2019). On the correlation between nodal and boundary lengths for random spherical harmonics. \emph{Preprint arXiv:1902.05750}.

\bibitem[MRW20]{MRW20} D. Marinucci, M. Rossi, I. Wigman (2020). The asymptotic equivalence of the sample trispectrum and the nodal length for random spherical harmonics. \emph{Annales de l’Institut Henri Poincar\'e, Probabilités et Statistiques}, 56, 1, 374--390. 

\bibitem[MW14]{MW14} D. Marinucci, I. Wigman (2014). On Nonlinear Functionals of Random Spherical Eigenfunctions. \emph{Communications in Mathematical Physics}, 327, 3, 849--872.

\bibitem[NP09]{NP09} I. Nourdin, G. Peccati (2009). Stein's method on Wiener chaos. \emph{Probability Theory and Related Fields}, 145(1), 75--118.

\bibitem[NP12]{NP12} I. Nourdin, G. Peccati (2012). \emph{Normal Approximations Using Malliavin Calculus: from Stein's Method to Universality}. Cambridge Tracts in Mathematics. Cambridge University Press.

\bibitem[NPR19]{NPR19} I. Nourdin, G. Peccati, M. Rossi (2019). Nodal statistics of planar random waves, \emph{Communications in Mathematical Physics}, 369, 1, 99--151.

\bibitem[NP05]{NP05} D. Nualart, G. Peccati (2005). Central limit theorems for sequences of multiple stochastic integrals. \emph{Annals of Probability}, 33(1), 177--193. 

\bibitem[PR18]{PR18} G. Peccati, M. Rossi (2018). Quantitative limit theorems for local functionals of arithmetic random waves. In \emph{Computation and Combinatorics in Dynamics, Stochastics and Control, The Abel Symposium, Rosendal, Norway, August 2016}, 13, 659--689. 

\bibitem[RSS78]{RSS78} R. Rudzkis, L. Saulis, V. Statuljavi\v{c}us (1978). A general lemma on probabilities of large deviations. 
\emph{Lithuanian Mathematical Journal}, 18(2), 96--116.



\bibitem[ST16]{ScTh16} M. Schulte, C. Th\"ale (2016). Cumulants on Wiener chaos: moderate deviations and the fourth moment theorem. \emph{Journal of Functional Analysis}, 270, 6, 2223--2248.

\bibitem[Sze75]{Sze75} G.  Szeg\"o  (1975). \emph{Orthogonal  polynomials}. American Mathematical Society, Providence, R.I., fourth edition, 1975. American Mathematical Society, Colloquium Publications, Vol. XXIII.

\bibitem[Tod19]{Tod19} A. P. Todino (2019). A Quantitative Central Limit Theorem for the Excursion Area of Random Spherical Harmonics over Subdomains of $\mathbb S^2$. \emph{Journal of Mathematical Physics}, 60, 023505.

\bibitem[Tod20]{Tod18} A. P. Todino (2020). Nodal Lengths in Shrinking Domains for Random Eigenfunctions on $\mathbb S^2$. \emph{Bernoulli}, 26, 4, 3081--3110.


\bibitem[Wig10]{Wig10} I. Wigman (2010). Fluctuation of the Nodal Length of Random Spherical Harmonics, \emph{Communications in Mathematical Physics}, 298, 3, 787--831.

\bibitem[Yau82]{Yau82} S. T. Yau (1982). Survey on partial differential equations in differential geometry.  In \emph{Seminar on Differential Geometry}, volume 102 of \emph{Annals of Mathematical Studies}, 3--71. Princeton University Press, Princeton, N.J.

\end{thebibliography}
\end{document}